\title{The power operation structure on Morava $E$-theory of height 2 at the prime 3}
\author{Yifei Zhu}
\address{Department of Mathematics\\University of Minnesota\\Minneapolis, MN 55455\\USA}
\email{zyf@math.umn.edu}
\newtheorem{thm}{Theorem}
\newtheorem{cor}[thm]{Corollary}
\newtheorem{prop}[thm]{Proposition}
\theoremstyle{definition}
\newtheorem{defn}[thm]{Definition}
\theoremstyle{remark}
\newtheorem{rmk}[thm]{Remark}
\newtheorem{ex}[thm]{Example}
\def\co{\colon\thinspace}
\newcommand{\mb}[1]{\mathbb{#1}}
\newcommand{\Spec}{{\rm Spec\thinspace}}
\newcommand{\Proj}{{\rm Proj\thinspace}}
\newcommand{\Mod}{{\rm Mod}}
\newcommand{\Alg}{{\rm Alg}}
\newcommand{\DL}{Dyer-Lashof~}
\newcommand{\BF}{{\mb F}}
\newcommand{\BG}{{\mb G}}
\newcommand{\BP}{{\mb P}}
\newcommand{\BZ}{{\mb Z}}
\newcommand{\HC}{\widehat{C}}
\newcommand{\HS}{\widehat{S}}
\newcommand{\Tf}{\widetilde{f}}
\newcommand{\Tp}{\widetilde{\psi}}
\newcommand{\md}{~~{\rm mod}~}
\newcommand{\ad}{{\rm and}}
\newcommand{\A}{\alpha}
\newcommand{\G}{\Gamma}
\newcommand{\g}{\gamma}
\newcommand{\K}{\kappa}
\newcommand{\p}{\psi^3}
\newcommand{\s}{S^\bullet}
\newcommand{\isog}[1]{Proposition \ref{prop:isog}\thinspace \eqref{isog(#1)}}
\newcommand{\q}[1]{Proposition \ref{prop:Q}\thinspace \eqref{Q(#1)}}
\newcommand{\go}[1]{Definition \ref{def:go}\thinspace \eqref{go(#1)}}
\begin{document}
\begin{abstract}
 We give explicit calculations of the algebraic theory of power 
 operations for a specific Morava $E$-theory spectrum and its 
 $K(1)$-localization.  These power operations arise from the universal 
 degree-3 isogeny of elliptic curves associated to the $E$-theory.  
\end{abstract}

\maketitle
\section{Introduction}

Suppose $E$ is a commutative $S$-algebra, in the sense of \cite{EKMM}, 
and $A$ is a commutative $E$-algebra.  We want to capture the properties 
and underlying structure of the homotopy groups $\pi_* A = A_*$ of $A$, 
by studying operations associated to the cohomology theory that $E$ 
represents.  

An important family of cohomology operations, called {\em power 
operations}, is constructed via the extended powers.  Specifically, 
consider the {\em $m$'th extended power} functor 
\[
 \BP_E^m (-) \coloneqq (-)^{\wedge_E m} / \Sigma_m \co \Mod_E \to \Mod_E 
\]
on the category of $E$-modules, which sends an $E$-module to its 
$m$-fold smash product over $E$ modulo the action by the symmetric group 
on $m$ letters.  The $\BP_E^m (-)$'s assemble together to give the {\em 
free commutative $E$-algebra} functor 
\[
 \BP_E (-) \coloneqq \bigvee_{m \geq 0} \BP_E^m (-) \co \Mod_E \to \Alg_E 
\]
from the category of $E$-modules to the category of commutative 
$E$-algebras.  These functors descend to homotopy categories.  In 
particular, each $\A \in \pi_{d+i}~\BP_E^m (\Sigma^d E)$ gives rise to a 
power operation 
\[
 Q_\A \co A_d \to A_{d+i} 
\]
(cf.~\cite[Sections I.2 and IX.1]{H_infty} and \cite[Section 3]{cong}).  

Under the action of power operations, $A_*$ is an algebra over some 
operad in $E_*$-modules involving the structure of $E_* B\Sigma_m$ for 
all $m$.  This operad is traditionally called a {\em \DL algebra}, or 
more precisely, a \DL {\em theory} as the {\em algebraic theory} of 
power operations acting on the homotopy groups of commutative 
$E$-algebras (cf.~\cite[Chapters III, VIII, and IX]{H_infty} and 
\cite[Section 9]{lpo}).  

A specific case is when $E$ represents a Morava $E$-theory of height $n$ 
and $A$ is $K(n)$-local.  Morava $E$-theory spectra play a crucial role 
in modern stable homotopy theory, particularly in the work of Ando, 
Hopkins, and Strickland on the topological approach to elliptic genera 
(see \cite{cube}).  As recalled in \cite[1.5]{cong}, the $K(n)$-local 
$E$-\DL theory is largely understood based on work of those authors.  In 
\cite{cong}, Rezk maps out the foundations of this theory.  He gives a 
congruence criterion for an algebra over the \DL theory 
(\cite[Theorem A]{cong}).  This enables one to study the \DL 
{\em theory}, which models all the algebraic structure naturally 
adhering to $A_*$, by working with a certain associative ring $\G$ as 
the \DL {\em algebra}.  Moreover, Rezk provides a geometric description 
of this congruence criterion, in terms of sheaves on the moduli problem 
of deformations of formal groups and Frobenius isogenies (see 
\cite[Theorem B]{cong}).  This connects the structure of $\G$ to the 
geometry underlying $E$, moving one step forward from a workable object 
$\G$ to things that are computable.  In a companion paper \cite{h2p2}, 
Rezk gives explicit calculations of the \DL theory for a specific Morava 
$E$-theory of height $n = 2$ at the prime 2.  

The purpose of this paper is to make available calculations analogous to 
some of the results in \cite{h2p2}, at the prime 3, together with 
calculations of the corresponding power operations on the 
$K(1)$-localization of the Morava $E$-theory spectrum.

\subsection{Outline of the paper}

As in \cite{h2p2}, the computation of power operations in this paper 
follows the approach of \cite{steenrod}: one first defines a total power 
operation, and then uses the computation of the cohomology of the 
classifying space $B\Sigma_m$ for the symmetric group to obtain 
individual power operations.  These two steps are carried out in 
Sections \ref{sec:total} and \ref{sec:individual} respectively.  

In Section \ref{sec:total}, by doing calculations with elliptic curves 
associated to our Morava $E$-theory $E$, we give formulas for the total 
power operation $\p$ on $E_0$ and the ring $S_3$ which represents the 
corresponding moduli problem.  

In Section \ref{sec:individual}, based on calculations of 
$E^* B\Sigma_m$ in \cite{Str98} as reflected in the formula for $S_3$, 
we define individual power operations, and derive the relations they 
satisfy.  In view of the general structures studied in \cite{cong}, we 
then get an explicit description of the \DL algebra $\G$ for 
$K(2)$-local commutative $E$-algebras.  

In Section \ref{sec:K(1)}, we describe the relationship between the 
total power operation $\p$, at height 2, and the corresponding 
$K(1)$-local power operations.  We then derive formulas for the latter 
from the calculations in Section \ref{sec:total}.  

\begin{rmk}
\label{rmk:grading}
 In Section \ref{sec:total}, we do calculations with a universal 
 elliptic curve over {\em all} of the moduli stack which is an affine 
 open subscheme of a weighted projective space (cf.~Proposition 
 \ref{prop:C}).  At the prime 3, the supersingular locus consists of a 
 single closed point, and the corresponding Morava $E$-theory arises 
 {\em locally} in an affine coordinate chart of this weighted projective 
 space containing the supersingular locus.  In this paper we choose a 
 particular affine coordinate chart for computing the homotopy groups of 
 the $E$-theory spectrum and the power operations; we hope that the 
 generality of the calculations in Section \ref{sec:total} makes it 
 easier to work with other coordinate charts as well.  
\end{rmk}

\begin{rmk}
\label{rmk:parameter}
 The ring $S_3$ turns out to be an algebra with one generator over the 
 base ring where our elliptic curve is defined (cf.~\isog{i} and 
 \eqref{S_3}).  This generator appears as a parameter in the formulas 
 for the total power operation $\p$, and is responsible for how the 
 individual power operations are defined and how their formulas look.  
 Different choices of this parameter result in different bases of the 
 \DL algebra $\G$.  The parameter in this paper comes from the relative 
 cotangent space of the elliptic curve at the identity (see \isog{iv}, 
 Corollary \ref{cor:K'}, and Remark \ref{rmk:K'}).  This choice is 
 convenient for deriving Adem relations in \q{iv}, and it fits into the 
 treatment of gradings in \cite[Section 2]{cong} (see \go{ii} and 
 Theorem \ref{thm:gamma}).  We should point out that our choice is by no 
 means canonical.  We do not know yet, as part of the structure of the 
 \DL algebra, if there is a canonical basis which is both geometrically 
 interesting and computationally convenient.  Somewhat surprisingly, 
 although it appears to come from different considerations, our choice 
 has an analog at the prime 2 which coincides with the parameter used in 
 \cite{h2p2} (see Remarks \ref{rmk:K} and \ref{rmk:KK'}).  The 
 calculations follow a recipe in hope of generalizing to other Morava 
 $E$-theories of height 2; we hope to address these matters and 
 recognize more of the general patterns based on further computational 
 evidence.  
\end{rmk}

\subsection{Acknowledgements}

I thank Charles Rezk for his encouragement on this work, and for his 
observation in a correspondence which led to Proposition 
\ref{prop:frob^2} and Corollary \ref{cor:K'}.  

I thank Kyle Ormsby for helpful discussions on Section \ref{sec:total}, 
and for directing me to places in the literature.  

I thank Tyler Lawson for the sustained support from him I received as a 
student.

\subsection{Conventions}

Let $p$ be a prime, $q$ a power of $p$, and $n$ a positive integer.  We 
use the symbols 
\[
 \BF_q\text{,}~~\BZ_q\text{,}~~\ad~~\BZ/n 
\]
to denote a field with $q$ elements, the ring of $p$-typical Witt 
vectors over $\BF_q$, and the additive group of integers modulo $n$, 
respectively.  

If $R$ is a ring, then $R\llbracket x \rrbracket$ and $R (\!(x)\!)$ 
denote the rings of formal power series and formal Laurent series over 
$R$ in the variable $x$ respectively.  If $I \subset R$ is an ideal, 
then $R_I^\wedge$ denotes the completion of $R$ with respect to $I$.  

If $E$ is an elliptic curve and $m$ is an integer, then $[m]$ denotes 
the multiplication-by-$m$ map on $E$, and $E[m]$ denotes the $m$-torsion 
subgroup scheme of $E$.  

All formal groups mentioned in this paper will be commutative and 
one-dimensional.  

The terminology for the structure of the \DL theory will follow 
\cite{cong} and \cite{h2p2}; some of the notions there are taken in turn 
from \cite{BW} and \cite{V}.

\section{Total power operations}
\label{sec:total}

\subsection{A universal elliptic curve and a Morava $E$-theory spectrum}
\label{subsec:ec}

A Morava $E$-theory of height 2 at the prime 3 has its formal group as 
the universal deformation of a height-2 formal group over a perfect 
field of characteristic 3.  Given a supersingular elliptic curve over 
such a field, its formal completion at the identity produces a formal 
group of height 2.  To study power operations for the corresponding 
$E$-theory, we do calculations with the universal deformation of that 
supersingular elliptic curve which is a family of elliptic curves with a 
$\G_1(N)$-structure (see \cite[Section 3.2]{KM}) where $N$ is prime to 3.  
Here is a specific model (cf.~\cite[4(4.6a)]{husemoller}).  

\begin{prop}
\label{prop:C}
 Over $\BZ [1/4]$, the moduli problem of nonsingular elliptic curves 
 with a choice of a point of exact order 4 and a nowhere-vanishing 
 invariant one-form is represented by 
 \begin{equation}
 \label{Cxy}
  C \co y^2 + a x y + a b y = x^3 + b x^2 
 \end{equation}
 with chosen point $(0,0)$ and one-form 
 $dx / (2 y + a x + a b) = dy / (3 x^2 + 2 b x - a y)$ over the graded 
 ring 
 \[
  \s \coloneqq \BZ [1/4] [a, b, \Delta^{-1}] 
 \]
 where $|a| = 1$, $|b| = 2$, and $\Delta = a^2 b^4 (a^2 - 16 b)$.  
\end{prop}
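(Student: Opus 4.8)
The plan is to prove the statement as a representability assertion: I must exhibit, for every $\BZ[1/4]$-algebra $R$, a natural bijection between the set of triples $(E, P, \omega)$ consisting of a nonsingular elliptic curve $E/R$, a point $P$ of exact order $4$, and a nowhere-vanishing invariant one-form $\omega$, on the one hand, and the set of (graded) ring maps $\s \to R$ on the other. The main tool is the reduction of such a triple to a Weierstrass equation in Tate normal form, following standard Weierstrass theory (cf.~\cite{husemoller, KM}). The point of including the rigidifying datum $\omega$ is that it kills the ambiguity of rescaling the differential, so that the moduli problem is representable by an honest graded ring rather than merely by a stack; the grading on $\s$ records the residual $\Gm$-action.

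First I would put a general triple into normal form. The one-form $\omega$ together with the point $P$ lets me choose a Weierstrass model $y^2 + a_1 x y + a_3 y = x^3 + a_2 x^2 + a_4 x + a_6$ whose invariant differential is $\omega = dx/(2y + a_1 x + a_3)$. Translating so that $P = (0,0)$ forces $a_6 = 0$, and applying a shear $y \mapsto y + s x$ to make the tangent line at $P$ horizontal forces $a_4 = 0$. At this stage the only remaining coordinate freedom is the scaling $(x, y) \mapsto (u^{-2} x, u^{-3} y)$ with $\omega \mapsto u\,\omega$; since $\omega$ is fixed, this scaling is pinned down, and it is precisely this $\Gm$-action that the grading $|a_1| = 1$, $|a_2| = 2$ will encode.

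Next I would impose that $P = (0,0)$ has exact order $4$. Intersecting the horizontal tangent $y = 0$ with the curve $y^2 + a_1 x y + a_3 y = x^3 + a_2 x^2$ gives the third intersection point $(-a_2, 0)$, whence $2P = (-a_2,\, a_1 a_2 - a_3)$; requiring $2P$ to be a nonzero $2$-torsion point (so that $P$ has order exactly $4$, and not $1$, $2$, or $3$) yields the single relation $a_3 = a_1 a_2$. Writing $a := a_1$ and $b := a_2$ then produces exactly the curve $C\co y^2 + a x y + a b y = x^3 + b x^2$ with the asserted point $(0,0)$ and one-form $dx/(2y + ax + ab)$, and shows that the assignment $(E, P, \omega) \mapsto (a, b)$ is well defined.

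It remains to identify the image of this assignment with $\Spec \s$, and this is where I expect the real work to lie. I would compute the Weierstrass discriminant of $C$ and verify the factorization $\Delta = a^2 b^4 (a^2 - 16 b)$, and then argue that $\Delta \in R^\times$ is \emph{equivalent} to the conjunction of two conditions: that $C$ is nonsingular (governed by the factor $a^2 - 16b$, whose appearance of $16 = 4^2$ also explains the inversion of $2$) and that $P$ retains exact order $4$ rather than degenerating to lower order (governed by the factors $a^2$ and $b^4$). Establishing this equivalence in both directions, with no over- or under-counting of the degenerate loci, is the crux of the argument; once it is in place, inverting $\Delta$ cuts out exactly the valid triples and gives the identification with $\s = \BZ[1/4][a, b, \Delta^{-1}]$. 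Finally I would check that $a$, $b$, and $\Delta$ transform with weights $1$, $2$, and $12$ under the scaling above, so that the resulting bijection is compatible with the stated grading.
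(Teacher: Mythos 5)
Your proposal is correct and takes essentially the same approach as the paper's own proof: the paper likewise reduces to the Tate normal form via the geometry of $P$ and $2P$ (its observation that the tangent line at $P$ passes through $2P$ is exactly your condition that $2P = (-a_2,0)$ be $2$-torsion, i.e.\ $a_3 = a_1 a_2$), and then cites the analogous argument of Mahowald--Rezk for the remaining details. The one place you overestimate the difficulty is the "crux": $a^2 b^4 (a^2 - 16 b)$ is precisely the Weierstrass discriminant of $C$, so its invertibility is \emph{equivalent} to nonsingularity of the equation, and it in turn forces $a$ and $b$ to be units, which (with $2$ invertible, so exact order $4$ can be checked fiberwise) rules out any degeneration of the level structure --- in particular the vanishing loci of $a^2 b^4$ and of $a^2 - 16b$ both produce singular curves, so the clean factor-by-factor attribution you suggest (nonsingularity from $a^2 - 16b$, order degeneration from $a^2 b^4$) is not quite how the equivalence decomposes, though this does not affect the validity of your plan.
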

\begin{proof}
 Let $P$ be the chosen point of exact order 4.  Since $2P$ is 2-torsion, 
 the tangent line of the elliptic curve at $P$ passes through $2P$, and 
 the tangent line at $2P$ passes through the identity at the infinity.  
 With this observation, the rest of the proof is analogous to that of 
 \cite[Proposition 3.2]{tmf3}.  
\end{proof}

Over a finite field of characteristic 3, $C$ is supersingular precisely 
when the quantity 
\begin{equation}
\label{H}
 H \coloneqq a^2 + b 
\end{equation}
vanishes (cf.~\cite[V.4.1a]{AEC}).  As $(3,H)$ is a homogeneous maximal 
ideal of $\s$ corresponding to the closed subscheme $\Spec \BF_3$, the 
supersingular locus consists of a single closed point, and $C$ restricts 
to $\BF_3$ as 
\[
 C_0 \co y^2 + x y - y = x^3 - x^2.  
\]

From the above universal deformation $C$ of $C_0$, we next produce a 
Morava $E$-theory spectrum which is 2-periodic.  We follow the 
convention that elements in algebraic degree $n$ lie in topological 
degree $2n$, and work in an affine \'etale coordinate chart of the 
weighted projective space $\Proj \BZ [1/4] [a, b]$ (see Remark 
\ref{rmk:grading}).  Define elements $u$ and $c$ such that 
\[
 a = u c \qquad \ad \qquad b = u^2.  
\]
Consider the graded ring 
\[
 \s [u^{-1}] \cong \BZ [1/4] [a, \Delta^{-1}] [u^{\pm1}] 
\]
where $|u| = 1$, and denote by $S$ its subring of elements in degree 0 
so that 
\begin{equation}
\label{S}
 S \cong \BZ [1/4] [c, \delta^{-1}] 
\end{equation}
where $\delta = u^{-12} \Delta = c^2 (c^2 - 16)$.  Write 
\[
 \HS = \BZ_9 \llbracket h \rrbracket 
\]
where 
\begin{equation}
\label{h}
 h \coloneqq u^{-2} H = c^2 + 1.  
\end{equation}
Let $i$ be an element generating $\BZ_9$ over $\BZ_3$ with $i^2 = -1$.  
We may choose 
\[
 c \equiv i \md (3,h) 
\]
and we have 
\[
 \delta \equiv -1 \md (3,h) 
\]
where $(3,h)$ is the maximal ideal of the complete local ring $\HS$.  
Then by Hensel's lemma, both $c$ and $\delta$ lie in $\HS$, and both are 
invertible.  Thus 
\[
 \HS \cong S_{(3,h)}^\wedge.  
\]

Now $C$ restricts to $S$ as 
\begin{equation}
\label{Cc}
 y^2 + c x y + c y = x^3 + x^2.  
\end{equation}
Let $\HC$ be the formal completion of $C$ over $S$ at the identity.  It 
is a formal group over $\HS$, and its reduction to $\BF_9 = \HS / (3,h)$ 
is a formal group $\BG$ of height 2 in view of \eqref{h} and \eqref{H}.  
By the Serre-Tate theorem (see \cite[2.9.1]{KM}), 3-adically the 
deformation theory of an elliptic curve is equivalent to the deformation 
theory of its 3-divisible group, and thus $\HC$ is the universal 
deformation of $\BG$ in view of Proposition \ref{prop:C}.  Let $E$ be 
the $E_\infty$-ring spectrum which represents the Morava $E$-theory 
associated to $\BG$ (see \cite[Corollary 7.6]{GH}).  Then 
\[
 E_* \cong \BZ_9 \llbracket h \rrbracket [u^{\pm 1}] 
\]
where $u$ is in topological degree 2, and it corresponds to a local 
uniformizer at the identity of $C$.

\subsection{Points of exact order 3}

To study $C$ in a formal neighborhood of the identity, it is convenient 
to make a change of variables.  Let 
\[
 u = \frac{x}{y} \quad \ad \quad v = \frac{1}{y}, \qquad {\rm so} \qquad x = \frac{u}{v} \quad \ad \quad y = \frac{1}{v}.  
\]
The identity of $C$ is then $(u,v) = (0,0)$, with $u$ a local 
uniformizer.  The equation \eqref{Cxy} of $C$ becomes 
\begin{equation}
\label{Cuv}
 v + a u v + a b v^2 = u^3 + b u^2 v.  
\end{equation}

\begin{prop}
\label{prop:tors}
 On the elliptic curve $C$ over $\s$, the $uv$-coordinates $(d,e)$ of 
 any nonzero 3-torsion point satisfy the identities 
 \begin{equation}
 \label{f}
  f(d) = 0 
 \end{equation}
 and 
 \begin{equation}
 \label{g}
  e = g(d) 
 \end{equation}
 where $f, g \in \s [u]$ are given by 
 \begin{equation*}
 \begin{split}
  f(u) = & ~ b^4 u^8 + 3 a b^3 u^7 + 3 a^2 b^2 u^6 + (a^3 b + 7 a b^2) u^5 + (6 a^2 b - 6 b^2) u^4 + 9 a b u^3 \\
         & + (-a^2 + 8 b) u^2 - 3 a u - 3, \\
  g(u) = & -\frac{1}{a (a^2 - 16 b)} \big( a b^3 u^7 + (3 a^2 b^2 - 2 b^3) u^6 + (3 a^3 b -6 a b^2) u^5 + (a^4 + a^2 b \\
         & + 2 b^2) u^4 + (4 a^3 - 15 a b) u^3 + 18 b u^2 - 12 a u - 18 \big).  
 \end{split}
 \end{equation*}
\end{prop}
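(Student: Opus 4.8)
The plan is to use the classical characterization of $3$-torsion: a point $P$ on $C$ has exact order $3$ precisely when $P \neq O$ and $2P = -P$. In the Weierstrass coordinates of \eqref{Cxy} this says that the $x$-coordinate of a nonzero $3$-torsion point is a root of the $3$-division polynomial $\psi_3(x) = 3x^4 + b_2 x^3 + 3b_4 x^2 + 3b_6 x + b_8$, where one reads off $b_2 = a^2 + 4b$, $b_4 = a^2 b$, $b_6 = a^2 b^2$, and $b_8 = a^2 b^3$ from \eqref{Cxy} (cf.~\cite{AEC}). Each of the four roots of $\psi_3$ supports the two points $P$ and $-P$, and these have distinct values of $u = x/y$ because a nonzero $3$-torsion point is never $2$-torsion; this accounts for all eight nonzero points of $C[3] \cong (\BZ/3)^2$ and is the reason to expect $\deg f = 8$.

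Next I would move everything into the $uv$-coordinates. Rewriting the curve relation \eqref{Cuv} as the quadratic $ab\,v^2 + (1 + au - bu^2)\,v - u^3 = 0$ in $v$ lets me reduce any polynomial in $v$ modulo \eqref{Cuv} to one that is at most linear in $v$. Substituting $x = u/v$ into $\psi_3$ and clearing denominators gives a polynomial in $(u,v)$ which, reduced modulo this quadratic, takes the form $A(u) + B(u)\,v = 0$ and holds at every nonzero $3$-torsion point. Solving for $v$ gives $v = -A(u)/B(u)$; reducing this rational expression modulo \eqref{f}, equivalently inverting $B$ modulo $f$, produces the polynomial representative of degree at most $7$ that is the right-hand side of \eqref{g}. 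The denominator $a(a^2 - 16b)$ occurring in $g$ is a unit of $\s$, since both $a$ and $a^2 - 16b$ divide $\Delta$, so $g$ genuinely lies in $\s[u]$.

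To isolate $f$ itself I would eliminate $v$ between the linear relation $A + Bv = 0$ and the quadratic \eqref{Cuv}, that is, form the resultant in $v$. After clearing denominators this elimination carries extraneous factors---arising from the substitution $x = u/v$ and from the vanishing of leading coefficients---which must be stripped away to leave the degree-$8$ polynomial of \eqref{f}. Two facts pin down the normalization: $C[3]$ has exactly eight nonzero geometric points, forcing $\deg f = 8$, and the value $f(0) = -3 \neq 0$ correctly excludes the identity $(u,v) = (0,0)$.

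The conceptual content is entirely in the first paragraph; the main obstacle is the symbolic bookkeeping in the last two. Reducing the high powers of $v$ modulo the quadratic, carrying out the resultant, identifying and discarding the extraneous factors, and then inverting $B$ modulo $f$ all have to be done carefully enough over $\s$ to reproduce the stated coefficients of $f$ and $g$ exactly; this is routine in principle but delicate in practice.
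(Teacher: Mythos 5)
Your skeleton (division polynomial, passage to $uv$-coordinates, elimination of $v$ against the quadratic, Bézout inversion of $B$ modulo $f$ to get $g$) is the same as the paper's, but there is a genuine gap in how you deduce $f(d)=0$ from the elimination. The resultant you form does vanish at $d$, and it factors (up to a unit of $\s$) as $u^4 f(u)$; however, ``stripping away the extraneous factor $u^4$'' is not a formal operation: $d^4 f(d) = 0$ implies $f(d)=0$ only if $d$ is a non-zero-divisor in the ring where it lives. The proposition is a universal statement over $\s = \BZ[1/4][a,b,\Delta^{-1}]$, in which $3$ is \emph{not} inverted, so $d$ must be taken as the image of $u$ in the coordinate ring $\mathcal{O}_D$ of the scheme $D$ of points of exact order $3$. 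Over the supersingular locus in characteristic $3$ the group scheme $C[3]$ is infinitesimal, so the fiber of $D$ there is supported entirely at the identity and $d$ is nilpotent in that fiber (consistently, $f(u) \equiv b^4 u^8$ modulo $(3,H)$); in particular $d$ is a zero divisor, and the cancellation you need fails exactly at the point this paper cares about. For the same reason your normalization argument --- ``$C[3]$ has exactly eight nonzero geometric points, forcing $\deg f = 8$'' --- is only valid after inverting $3$: in characteristic $3$ there are at most two such geometric points (none on the supersingular locus), and the correct statement, that $D$ is finite locally free of rank $8$ over $\s$, is the Katz--Mazur theorem on Drinfeld level structures \cite[2.3.1]{KM}, not a point count.

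This is precisely the content your proposal omits and the paper supplies: it cites \cite[2.3.1]{KM} for the rank-$8$ statement, argues via Cayley--Hamilton that $u$ satisfies a monic degree-$8$ equation which globally defines the affine scheme $D$, and then uses irreducibility of $f$ (Eisenstein's criterion at the homogeneous prime $(3,H)$) to identify that equation with $f$ up to a unit. Alternatively, your argument can be repaired by a descent step: over $\s[1/3]$ the scheme $D$ is finite étale and no geometric point of $D$ has $u=0$ (the only points of $C$ with $u = 0$ are the identity and the $4$-torsion points $(0,0)$, $(0,-ab)$), so $d$ is invertible in $\mathcal{O}_D[1/3]$ and $f(d)=0$ holds there; since $\mathcal{O}_D$ is locally free over the domain $\s$, it is $3$-torsion-free, so $\mathcal{O}_D \to \mathcal{O}_D[1/3]$ is injective and the identity descends to $\mathcal{O}_D$. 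Either way, the flatness/rank input from Katz--Mazur is indispensable; without it your argument proves the proposition only after inverting $3$. Note also that your derivation of $g$ presupposes $f(d)=0$ (you invert $B$ modulo $f$), so it inherits the same gap, and the invertibility of $B$ modulo $f$, i.e.\ $\gcd(B,f)=1$ in $\s[u]$, still has to be verified by the Euclidean algorithm as in the paper.
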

\begin{proof}
 \footnote{See Appendix \ref{apx:tors} for explicit formulas for the 
 polynomials $\Tf$, $Q_1$, $R_1$, $Q_2$, $R_2$, $K$, $L$, $M$, and $N$ 
 that appear in the proof.  }
 Given the elliptic curve $C$ with equation \eqref{Cxy}, a nonzero point 
 $Q$ is 3-torsion if and only if the polynomial 
 \[
  \psi_3 (x) \coloneqq 3 x^4 + (a^2 + 4 b) x^3 + 3 a^2 b x^2 + 3 a^2 b^2 x + a^2 b^3 
 \]
 vanishes at $Q$ (cf.~\cite[Exercise 3.7f]{AEC}).  Substituting 
 $x = u / v$ and clearing the denominators, we get a polynomial 
 \[
  \Tp_3(u,v) \coloneqq 3 u^4 + (a^2 + 4 b) u^3 v + 3 a^2 b u^2 v^2 + 3 a^2 b^2 u v^3 + a^2 b^3 v^4.  
 \]
 As $Q = (d,e)$ in $uv$-coordinates, we then have 
 \begin{equation}
 \label{Tp}
  \Tp_3(d,e) = 0.  
 \end{equation}

 To get the polynomial $f$, we take $v$ as variable and rewrite 
 \eqref{Cuv} as a quadratic equation 
 \begin{equation}
 \label{quadratic}
  a b v^2 + (-b u^2 + a u + 1) v - u^3 = 0, 
 \end{equation}
 where the leading coefficient $a b$ is invertible in 
 $\s = \BZ [1/4] [a, b, \Delta^{-1}]$ as $\Delta = a^2 b^4 (a^2 - 16 b)$.  
 Define 
 \begin{equation}
 \label{Tfdef}
  \Tf(u) \coloneqq \Tp_3(u,v) \Tp_3(u,\bar{v}) 
 \end{equation}
 where $v$ and $\bar{v}$ are formally the conjugate roots of 
 \eqref{quadratic} so that we compute $\Tf$ in terms of $u$ by 
 substituting 
 \[
  v + \bar{v} = \frac{b u^2 - a u - 1}{a b} \qquad \ad \qquad v \bar{v} = -\frac{u^3}{a b}.  
 \]
 We then factor $\Tf$ over $\s$ as 
 \begin{equation}
 \label{Tffactor}
  \Tf(u) = -\frac{u^4 f(u)}{a^2 b} 
 \end{equation}
 with $f$ the stated polynomial of order 8.  We check that $f$ is 
 irreducible by applying Eisenstein's criterion to the homogeneous prime 
 ideal $(3,H)$ of $\s$.  

 We have $\Tf(d) = 0$ by \eqref{Tfdef} and \eqref{Tp}.  To see 
 $f(d) = 0$, consider the closed subscheme $D \subset C[3]$ of points of 
 exact order 3.  By \cite[2.3.1]{KM} it is finite locally free of rank 8 
 over $\s$.  By the Cayley-Hamilton theorem, as a global section of $D$, 
 $u$ locally satisfies a homogeneous monic equation of order 8, and this 
 equation locally defines the rank-8 scheme $D$.  Since $D$ is affine, 
 it is then globally defined by such an equation.  In view of 
 $\Tf(d) = 0$ and \eqref{Tffactor}, we determine this equation, and (up 
 to a unit in $\s$) get the first stated identity \eqref{f}.  

 To get the polynomial $g$, we note that both the quartic polynomial 
 \[
  A(v) \coloneqq \Tp_3(d,v) 
 \]
 and the quadratic polynomial 
 \[
  B(v) \coloneqq a b v^2 + (-b d^2 + a d + 1) v - d^3 
 \]
 vanish at $e$, and thus so does their greatest common divisor (gcd).  
 Applying the Euclidean algorithm (see Appendix \ref{apx:tors} for 
 explicit expressions), we have 
 \begin{equation*}
 \begin{split}
  A(v) = & ~ Q_1(v) B(v) + R_1(v), \\
  B(v) = & ~ Q_2(v) R_1(v) + R_2, 
 \end{split}
 \end{equation*}
 where 
 \[
  R_1(v) = K(d) v + L(d) 
 \]
 for some polynomials $K$ and $L$, and $R_2 = 0$ in view of \eqref{f}.  
 Thus $R_1(v)$ is the gcd of $A(v)$ and $B(v)$, and hence 
 \[
  K(d) e + L(d) = R_1(e) = 0.  
 \]
 To write $e$ in terms of $d$ from the above identity, we apply the 
 Euclidean algorithm to $f$ and $K$.  Their gcd turns out to be 1, and 
 thus there are polynomials $M$ and $N$ with 
 \[
  M(u) f(u) + N(u) K(u) = 1.  
 \]
 By \eqref{f} we then have $N(d) K(d) = 1$, and thus 
 \[
  e = -N(d) L(d) = g(d) 
 \]
 where $g$ is as stated.  
\end{proof}

\begin{rmk}
\label{rmk:dmod3}
 The formula for $f$ in Proposition \ref{prop:tors} satisfies a 
 congruence 
 \[
  f(u) \equiv u^2 (b^4 u^6 + a b H u^3 - H) \md 3.  
 \]
 The two roots (counted with multiplicity) of $f(u)$ which reduce to 
 zero modulo 3 correspond to the two nonzero points in the unique 
 order-3 subgroup of $C$ in a formal neighborhood of the identity.  
\end{rmk}

\subsection{A universal isogeny and a total power operation}

\begin{prop}
\label{prop:isog}
 \mbox{}
 \begin{enumerate}[(i)]
  \item \label{isog(i)} The universal degree-3 isogeny $\psi$ with 
  source $C$ is defined over the graded ring 
  \[
   \s_3 \coloneqq \s [\K] \big/ \big( W(\K) \big) 
  \]
  where $|\K| = -2$ and 
  \begin{equation}
  \label{W}
   W(\K) = \K^4 - \frac{6}{b^2} ~ \K^2 + \frac{a^2 - 8 b}{b^4} ~ \K - \frac{3}{b^4}, 
  \end{equation}
  and has target the elliptic curve 
  \[
   C' \co v + a' u v + a' b' v^2 = u^3 + b' u^2 v 
  \]
  where 
  \begin{equation*}
  \begin{split}
   a' = & ~ \frac{1}{a} \big( (a^2 b^4 - 4 b^5) \K^3 + 4 b^4 \K^2 + (-6 a^2 b^2 + 20 b^3) \K + a^4 - 12 a^2 b + 12 b^2 \big), \\
   b' = & ~ b^3.  
  \end{split}
  \end{equation*}

  \item \label{isog(ii)} The kernel of $\psi$ is generated by a point 
  $Q$ of exact order 3 with coordinates $(d,e)$ satisfying 
  \begin{equation}
  \label{K}
  \begin{split}
   \K = & -\frac{1}{a^2 - 16 b} \big( a b^3 d^7 + (3 a^2 b^2 - 2 b^3) d^6 + (3 a^3 b - 6 a b^2) d^5 + (a^4 \\
        & + a^2 b + 2 b^2) d^4 + (4 a^3 - 15 a b) d^3 + (a^2 + 2 b) d^2 - 12 a d - 18 \big) \\
      = & ~ a e - d^2.  
  \end{split}
  \end{equation}

  \item \label{isog(iii)} The restriction of $\psi$ to the supersingular 
  locus at the prime 3 is the 3-power Frobenius endomorphism.  

  \item \label{isog(iv)} The induced map $\psi^*$ on the relative 
  cotangent space of $C'$ at the identity sends $du$ to $\K du$.  
 \end{enumerate}
\end{prop}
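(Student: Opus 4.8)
The plan is to construct $\psi$ from its kernel and then read off each claim in turn, with Vélu's formulas serving as the computational engine throughout. First I would treat the generic situation, where a degree-$3$ isogeny out of $C$ has kernel a cyclic order-$3$ subgroup $\langle Q\rangle = \{O, Q, -Q\}$ generated by a point $Q = (d,e)$ of exact order $3$, so that $f(d) = 0$ and $e = g(d)$ by Proposition~\ref{prop:tors}. Feeding this kernel into Vélu's formulas yields an explicit Weierstrass model for the quotient $C/\langle Q\rangle$ together with the quotient map. That model is not in the $\G_1(4)$ normal form of Proposition~\ref{prop:C}, so I would renormalize it by the unique coordinate change carrying the image of the chosen order-$4$ point and the invariant one-form to standard form; this produces $C'$ and, tracking coefficients, the stated $a'$ and $b' = b^3$. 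Vélu's isogeny preserves invariant differentials, so after renormalization the scalar by which the coordinate change acts on the cotangent line at the identity is exactly the leading coefficient of $\psi$ on the uniformizer $u$; calling it $\K$ gives $\psi^*(du) = \K\,du$, which is~(iv) by construction, and expanding that leading coefficient identifies $\K$ with $ae - d^2$, the second expression in~\eqref{K}.

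Next I would promote $\K$ from a function of the generator $Q$ to a function of the subgroup. Using the negation law on $C$ in $uv$-coordinates together with the relations~\eqref{f} and~\eqref{g}, one checks that $ae - d^2$ is invariant under $Q \mapsto -Q$, so $\K$ is a regular function on the rank-$4$ scheme of order-$3$ subgroups and therefore satisfies a monic degree-$4$ equation over $\s$. To produce it explicitly, substitute $e = g(d)$ into $\K = ae - d^2$ to obtain $\K$ as a polynomial $\phi(d)$ --- precisely the first expression in~\eqref{K} --- and eliminate $d$ against $f(d) = 0$. Since the eight roots of $f$ pair off under negation into the four subgroups, the resultant $\Res_u\bigl(\K - \phi(u),\, f(u)\bigr)$ is, up to a unit of $\s$, the square $W(\K)^2$; extracting its square root yields the quartic $W$ of~\eqref{W}, and hence $\s_3 = \s[\K]/(W(\K))$, completing~(i) and~(ii).

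For~(iii), along the supersingular locus cut out by $(3, H)$ with $H = a^2 + b$ as in~\eqref{H}, the curve is supersingular, so $C[3]$ is infinitesimal and its only order-$3$ subgroup scheme is the kernel of the $3$-power Frobenius; hence $\psi$ must reduce there to Frobenius. As a consistency check, reducing~\eqref{W} modulo $3$ gives $W(\K) \equiv \K\bigl(\K^3 + H/b^4\bigr)$, which degenerates to $\K^4$ along $(3,H)$, so $\K \equiv 0$ at the supersingular point --- compatible both with Frobenius annihilating the cotangent space (via~(iv)) and with the reduction of $f$ recorded in Remark~\ref{rmk:dmod3}.

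The main obstacle is computational rather than conceptual: carrying the Vélu construction and the renormalization far enough to pin down $a'$ in closed form, and performing the degree-$8$ elimination so as to recognize the resultant as a perfect square and take its square root inside $\s$. The one genuinely structural input --- the negation-invariance of $\K$, and the resulting $\pm$-pairing of the roots of $f$ into four subgroups --- is precisely what makes the square-root step legitimate and yields a quartic, rather than an octic, relation $W$.
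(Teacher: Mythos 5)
Your strategy for parts (i) and (ii) --- treating $\K$ as a function on the rank-4 scheme of order-3 subgroups, using the negation-invariance of $a e - d^2$ to pair the eight roots of $f$ into four subgroups, and extracting $W$ as the square root of a resultant --- is sound, and is close in spirit to how the paper gets \eqref{W} (it computes $\K$ as a polynomial in $d$ and reduces modulo $f(d)=0$). The gap is in the construction of $\psi$ itself, and it is a normalization problem. V\'elu's isogeny is precisely the one that pulls the target's invariant differential back to the source's, and your renormalization is prescribed to carry ``the invariant one-form to standard form''; the composite then acts as the identity on cotangent lines at the identity, i.e.\ $\K = 1$, contradicting (iv) and the formula $\K = a e - d^2$ in \eqref{K}. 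You cannot repair this by choosing a different rescaling without some other input (e.g.\ decreeing $b' = b^3$), because the $\G_1(4)$-normal form determines the target only up to the $\Gm$-action, and the scalar you must quotient out by is exactly $\K$: in your write-up the value of $\K$ is simultaneously an output of the renormalization and the datum needed to pin the renormalization down.

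Worse, the differential-preserving normalization does not exist over the locus that matters. The proposition asserts $\psi$ is defined over all of $\s_3$, including the supersingular point $(3,H)$, where the kernel degenerates to the connected order-3 subgroup and any quotient by it is a twist of Frobenius, hence induces the zero map on cotangent spaces; so no isogeny with this kernel can match invariant forms there. Your own consistency check exposes the clash: you find $\K \equiv 0$ at the supersingular point, while your normalization forces $\K = 1$. Relatedly, V\'elu's formulas evaluate coordinate functions at the nonzero kernel points and degenerate when those points collide with the identity, which is exactly what happens along $(3,H)$; and your argument for (iii) identifies only the \emph{kernel} of $\psi$ with that of Frobenius, which gives $\psi = (\text{isomorphism}) \circ \Frob$, not $\psi = \Frob$. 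The paper sidesteps all of this with the multiplicative ``norm'' construction $u(\psi(P)) = u(P)\,u(P-Q)\,u(P+Q)$ of \eqref{u'v'}--\eqref{psi}: it is defined integrally over $\s_3$, it has $\psi^*(du) = \K\,du$ with $\K = u(Q)\,u(-Q) = a e - d^2$ built in (Remark \ref{rmk:K}), and at the supersingular point, where $Q$ coincides with the identity, it becomes literally $u \mapsto u^3$, yielding (iii) and (iv) on the nose. A repaired V\'elu route would have to work after inverting 3, rescale by hand to the correct representative ($b' = b^3$), and then prove the formulas extend over $(3,H)$ --- at which point you have reconstructed the content of the norm construction with extra steps.
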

\begin{proof}
 \footnote{See Appendix \ref{apx:isog} for the power series expansion of 
 $v$ and details of the calculations involving the group law on $C$ that 
 appear in the proof.  }
 Let $P = (u,v)$ be a point on $C$, and $Q = (d,e)$ be a nonzero 
 3-torsion point.  Rewriting \eqref{Cuv} as 
 \[
  v = u^3 + b u^2 v - a u v - a b v^2, 
 \]
 we express $v$ as a power series in $u$ by substituting this equation 
 into itself recursively.  For the purpose of our calculations, we take 
 this power series up to $u^{12}$ as an expression for $v$, and write 
 $e = g(d)$ as in \eqref{g}.  

 Define functions $u'$ and $v'$ by 
 \begin{equation}
 \label{u'v'}
 \begin{split}
  u' \coloneqq & ~ u(P) \cdot u(P-Q) \cdot u(P+Q), \\
  v' \coloneqq & ~ v(P) \cdot v(P-Q) \cdot v(P+Q), 
 \end{split}
 \end{equation}
 where $u(-)$ and $v(-)$ denote the $u$-coordinate and $v$-coordinate of 
 a point respectively.  By computing the group law on $C$, we express 
 $u'$ and $v'$ as power series in $u$: 
 \begin{equation}
 \label{KL}
 \begin{split}
  u' = & ~ \K u + (\text{higher-order terms}), \\
  v' = & ~ \lambda u^3 + (\text{higher-order terms}), 
 \end{split}
 \end{equation}
 where the coefficients ($\K$, $\lambda$, etc.)~involve $a$, $b$, and 
 $d$.  In particular, in view of \eqref{f}, we compute that $\K$ 
 satisfies $W(\K) = 0$ with $|\K| = -2$ as stated in \eqref{isog(i)}.  

 Now define the isogeny $\psi \co C \to C'$ by 
 \begin{equation}
 \label{psi}
  u\big( \psi(P) \big) \coloneqq u' \qquad \ad \qquad v\big( \psi(P) \big) \coloneqq \frac{\K^3}{\lambda} \cdot v', 
 \end{equation}
 where we introduce the factor $\K^3 / \lambda$ so that the equation of 
 $C'$ will be in the Weierstrass form.  Using \eqref{KL} (see Appendix 
 \ref{apx:isog} for explicit expressions), we then determine the 
 coefficients in a Weierstrass equation and get the stated equation of 
 $C'$.  

 We next check the statement of \eqref{isog(ii)}.  In view of 
 \eqref{psi} and \eqref{u'v'}, the kernel of $\psi$ is the order-3 
 subgroup generated by $Q$.  In \eqref{K}, the first identity is 
 computed in \eqref{KL}; we then compare it with the formula for $g$ in 
 Proposition \ref{prop:tors} and get the second identity.  

 For \eqref{isog(iii)}, recall from Section \ref{subsec:ec} that the 
 supersingular locus at the prime 3 is $\Spec \BF_3$.  Over $\BF_3$, 
 since $C[3] = 0$ by \cite[V.3.1a]{AEC}, $Q$ coincides with the identity, 
 and thus 
 \[
  u\big( \psi(P) \big) = u(P) \cdot u(P-Q) \cdot u(P+Q) = \big( u(P) \big)^3.  
 \]
 As the $u$-coordinate is a local uniformizer at the identity, $\psi$ 
 then restricts to $\BF_3$ as the 3-power Frobenius endomorphism.  

 The statement of \eqref{isog(iv)} follows by definition of $\K$ in 
 \eqref{KL}.  
\end{proof}

\begin{rmk}
 In view of \isog{iii}, the formal completion of $\psi \co C \to C'$ at 
 the identity of $C$ is a {\em deformation of Frobenius} in the sense of 
 \cite[11.3]{cong}.  When it is clear from the context, we will simply 
 call $\psi$ itself a deformation of Frobenius.  
\end{rmk}

\begin{rmk}
\label{rmk:K}
 From \eqref{u'v'} and \eqref{KL} we have 
 \begin{equation}
 \label{norm}
  u(P-Q) \cdot u(P+Q) = \K + u \cdot (\text{higher-order terms}).  
 \end{equation}
 In particular $u(-Q) \cdot u(Q) = \K$ 
 (cf.~\cite[Proposition 7.5.2 and Section 7.7]{KM}).  The analog of $\K$ 
 at the prime 2 coincides with $d$ as studied in \cite[Section 3]{h2p2}.  
\end{rmk}

Recall from Section \ref{subsec:ec} that 
\[
 E^0 \cong \BZ_9 \llbracket h \rrbracket = \HS \cong S_{(3,h)}^\wedge 
\]
in which $c$ and $i$ are elements with $c^2 + 1 = h$ and $i^2 = -1$.  
Given the graded ring $\s_3$ in \isog{i}, define 
\begin{equation}
\label{S_3}
 S_3 \coloneqq S [\A] / \big( w(\A) \big) 
\end{equation}
where 
\begin{equation}
\label{w}
 w(\A) = \A^4 - 6 \A^2 + (c^2 - 8) \A - 3 
\end{equation}
(cf.~the definition of $S$ from $\s$ in \eqref{S}).  By 
\cite[Theorem 1.1]{Str98} we have 
\[
 E^0 B\Sigma_3 / I \cong \big( S_3 \big)_{(3,h)}^\wedge 
\]
where 
\begin{equation}
\label{transfer}
 I \coloneqq \bigoplus_{0<i<3} {\rm image} \big( E^0 B(\Sigma_i \times \Sigma_{3-i}) \xrightarrow{\rm transfer} E^0 B\Sigma_3 \big) 
\end{equation}
is the {\em transfer ideal}.  
In view of this and the construction of {\em total power operations} for 
Morava $E$-theories in \cite[3.23]{cong}, we have the following 
corollary.  
\begin{cor}
\label{cor:psi3}
 The total power operation 
 \[
  \p \co E^0 \to E^0 B\Sigma_3 / I \cong E^0 [\A] \big/ \big( w(\A) \big) 
 \]
 is given by 
 \begin{equation*}
 \begin{split}
  \p(h) = & ~ h^3 + (\A^3 - 6 \A - 27) h^2 + 3 (-6 \A^3 + \A^2 + 36 \A + 67) h \\
          & + 57 \A^3 - 27 \A^2 - 334 \A - 342, \\
  \p(c) = & ~ c^3 + (\A^3 - 6 \A - 12) c - 4 (\A + 1)^2 (\A - 3) c^{-1}, \\
  \p(i) \thinspace = & -i, 
 \end{split}
 \end{equation*}
 where 
 \begin{equation}
 \label{Amod3}
  \A \equiv 0 \md 3.  
 \end{equation}
\end{cor}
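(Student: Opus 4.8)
The plan is to obtain the three formulas directly from the universal degree-3 isogeny of Proposition \ref{prop:isog}, using the construction of total power operations in \cite[3.23]{cong}: under the isomorphism $E^0 B\Sigma_3 / I \cong E^0[\A]/(w(\A))$ supplied by \cite[Theorem 1.1]{Str98} and \eqref{S_3}, the operation $\p$ is the ring map classifying the quotient of the universal deformation by the universal order-3 subgroup. By the Serre-Tate comparison used in Section \ref{subsec:ec}, this quotient is the formal completion of the target curve $C'$ of the isogeny $\psi$ from \isog{i}, and the degree-0 parameter $\A = u^2 \K$ records the choice of kernel. Hence $\p$ sends each parameter of the source curve $C$ to the corresponding parameter of $C'$, and the problem reduces to expressing the parameters of $C'$ in the normalized coordinates $h$, $c$, and $i$.

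First I would use the dictionary $a = uc$, $b = u^2$ and put $C'$ into the standard shape $y^2 + c' x y + c' y = x^3 + x^2$ of \eqref{Cc} by the admissible rescaling $(a,b) \mapsto (\lambda a, \lambda^2 b)$ with $\lambda = u^{-3}$, which normalizes $b' = b^3$ to $1$ (consistently with the cotangent scaling $du \mapsto \K du$ of \isog{iv}) and yields $c' = a'/u^3$. Substituting the formula for $a'$ from \isog{i} into this and simplifying produces $\p(c)$; the factor $1/a$ in $a'$ is exactly what generates the $c^{-1}$ term. Since $\p$ is a ring homomorphism and $h = c^2 + 1$, I would then compute $\p(h) = \p(c)^2 + 1$ by squaring. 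The resulting Laurent expression involves $c^{\pm 2}$ and powers of $\A$ up to $\A^6$; reducing it to the stated polynomial in $h$ and $\A$ requires the two relations $c^2 = h - 1$ and $w(\A) = 0$, the latter in the form $\A^4 = 6\A^2 - (c^2 - 8)\A + 3$.

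The value $\p(i) = -i$ comes from \isog{iii}: on the supersingular locus $\p$ restricts to the $3$-power Frobenius of $\BF_9$. As $\p(i)^2 = \p(-1) = -1$ forces $\p(i) = \pm i$, and Frobenius sends $i \mapsto i^3 = -i$ while $i \not\equiv -i \md 3$, the sign is determined. For the congruence \eqref{Amod3} I would again invoke \isog{iii} together with the identity $\K = u(-Q) \cdot u(Q)$ of Remark \ref{rmk:K}: on the canonical order-3 subgroup, whose nonzero points reduce to the identity modulo $3$ (Remark \ref{rmk:dmod3}), the point $Q$ degenerates, so $\K$ and hence $\A$ vanish there. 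Modulo $3$ one has $w(\A) \equiv \A(\A^3 + h)$, and $\A \equiv 0 \md 3$ picks out the linear factor along which $\p$ reduces to the $3$-power map --- the deformation-of-Frobenius condition required by Rezk's congruence criterion \cite[Theorem A]{cong}.

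The main obstacle is the reduction in the second paragraph. Clearing the $c^{-2}$ term in $\p(c)^2$ and collapsing $\A^4, \A^5, \A^6$ by means of $w(\A) = 0$ must leave an expression with no negative powers of $c$ and no $\A$-degree above $3$; it is the particular shape of $w$ in \eqref{w}, interacting with $c^2 = h - 1$, that makes these cancellations occur and produces the integer coefficients displayed for $\p(h)$. Verifying that this collapse genuinely happens, rather than assuming it, is the computational heart of the corollary.
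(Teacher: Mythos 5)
Your proposal is correct and takes essentially the same route as the paper: the paper likewise reads off $\p(c)$ as the normalized coefficient $c'$ of the target curve $C'$ from \isog{i} (via Rezk's correspondence between deformations of Frobenius and total power operations), gets $\p(h) = \p(c)^2 + 1$ from the ring-homomorphism property with the same reduction modulo $w(\A)$, and pins down $\p(i) = -i$ by the Frobenius congruence $\p(i) \equiv i^3 \md 3$. Your derivation of \eqref{Amod3} from Remarks \ref{rmk:K} and \ref{rmk:dmod3} is a cosmetic variant of the paper's appeal to Remark \ref{rmk:dmod3} and \eqref{K}, so there is nothing substantively different to flag.
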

\begin{proof}
 By \isog{i}, in $xy$-coordinates, $C'$ restricts to $S_3$ as 
 \[
  y^2 + c' x y + c' y = x^3 + x^2 
 \]
 where 
 \[
  c' = \frac{1}{c} \big( (c^2 - 4) \A^3 + 4 \A^2 + (-6 c^2 + 20) \A + c^4 - 12 c^2 + 12 \big).  
 \]
 By \cite[Theorem B]{cong}, since the above equation is in the form of 
 \eqref{Cc}, there is a correspondence between the restriction to $S_3$ 
 of the universal isogeny $\psi$, which is a deformation of Frobenius, 
 and the total power operation $\p$.  In particular $\p(c)$ is given by 
 $c'$.  As $\p$ is a ring homomorphism, we then get the formula for 
 $\p(h) = \p(c^2 + 1)$.  We also have 
 \[
  \big( \p(i) \big)^2 = \p(-1) = -1, 
 \]
 and thus $\p(i) = i$ or $-i$.  We exclude the former possibility in 
 view of the congruence 
 \[
  \p(i) \equiv i^3 \md 3 
 \]
 by \cite[Propositions 3.25 and 10.5]{cong}.  

 The congruence \eqref{Amod3} follows from Remark \ref{rmk:dmod3} and 
 \eqref{K}.  
\end{proof}

\section{Individual power operations}
\label{sec:individual}

\subsection{A composite of deformations of Frobenius}

Recall from Proposition \ref{prop:isog} that over $\s_3$ we have the 
universal degree-3 isogeny $\psi \co C \to C' = C/G$ where $G$ is an 
order-3 subgroup of $C$; in particular, $\psi$ is a deformation of the 
3-power Frobenius endomorphism over the supersingular locus.  We want to 
construct a similar isogeny $\psi'$ with source $C'$ so that the 
composite $\psi' \circ \psi$ will correspond to a composite of total 
power operations via \cite[Theorem B]{cong}.  

Let $G' \coloneqq C[3]/G$ which is an order-3 subgroup of $C'$.  Recall 
from Section \ref{subsec:ec} that $C$ is the universal deformation of a 
supersingular elliptic curve $C_0$.  Since the 3-divisible group of 
$C_0$ is formal, $C_0[3]$ is connected.  Thus over a formal neighborhood 
of the supersingular locus, if $G$ is the unique connected order-3 
subgroup of $C$, $G'$ is then the unique connected order-3 subgroup of 
$C'$.  As in the proof of Proposition \ref{prop:isog}, we define 
$\psi' \co C' \to C'/G'$ using a nonzero point in $G'$ (see \eqref{u'v'} 
and \eqref{psi}), and $\psi'$ is then a deformation of Frobenius.  Over 
the supersingular locus, the pair $(\psi, \psi')$ is {\em cyclic in 
standard order} in the sense of \cite[6.7.7]{KM}.  We describe it more 
precisely as below.  

\begin{prop}
\label{prop:frob^2}
 The following diagram of elliptic curves over $\s_3$ commutes: 
 \begin{equation}
 \label{frob^2}
  \begin{tikzpicture}[baseline=(current bounding box.center)]
          \node (LT) at (0, 2) {$C$}; 
          \node (MT) at (3.8, 2) {$C/G = $}; 
          \node (RT) at (4.65, 2.04) {$C'$}; 
          \node (LB) at (1.9, 0) {$C/C[3]$}; 
          \node (MB) at (3.5, 0) {$\cong \frac{C/G}{C[3]/G} = $}; 
          \node (RB) at (4.65, 0.025) {$\frac{C'}{G'}$}; 
          \node at (4.95, -0.15) {.}; 
          \draw [->] (LT) -- node [above] {$\scriptstyle \psi$} (MT); 
          \draw [->] (LT) -- node [left] {$\scriptstyle [-3]$} (LB); 
          \draw [->] (RT) -- node [right] {$\scriptstyle \psi'$} (RB); 
  \end{tikzpicture}
 \end{equation}
\end{prop}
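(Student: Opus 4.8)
The plan is to identify the composite $\psi'\circ\psi$ by its kernel and then to pin down the resulting sign. First I would compute $\ker(\psi'\circ\psi)$: since $\ker\psi=G$ and $\ker\psi'=G'=C[3]/G$, and $\psi$ is the quotient of $C$ by $G$, the preimage $\psi^{-1}(G')$ equals $C[3]$, so $\psi'\circ\psi$ is an isogeny of degree $9$ with kernel $C[3]$. The isomorphism theorem supplies the identification $C'/G'=(C/G)/(C[3]/G)\cong C/C[3]$ appearing along the bottom of \eqref{frob^2}, and $C/C[3]\cong C$ via the map induced by $[-3]$. Under these identifications $\psi'\circ\psi$ becomes an isogeny $C\to C$ with kernel $C[3]$, as does $[-3]$ itself; hence the two differ by an automorphism of $C$, and the assertion of \eqref{frob^2} is precisely that this automorphism is trivial --- equivalently, that the composite is $[-3]$ rather than $[+3]$.

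To organize this sign I would introduce the dual isogeny $\hat\psi\co C'\to C$. From $\hat\psi\circ\psi=[3]_C$ one sees $\ker\hat\psi=\psi(C[3])=C[3]/G=G'$, which is exactly $\ker\psi'$. Hence $\psi'$ and $\hat\psi$ share a kernel and differ by a single automorphism of $C$ after the target identification, necessarily $\pm1$ wherever $\Aut(C)=\{\pm1\}$. Since $\hat\psi\circ\psi=[3]$, proving $\psi'=-\hat\psi$ is equivalent to the asserted $\psi'\circ\psi=[-3]$, so the whole proposition comes down to determining this one sign.

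The hard part is fixing that sign, because the naive cotangent comparison degenerates. By \isog{iv} the map $\psi^*$ is multiplication by $\K$ on $du$, but $\K\equiv0$ over the supersingular locus (in line with \eqref{Amod3}, and with Frobenius annihilating differentials in characteristic $3$), so the cotangent action cannot separate $[3]$ from $[-3]$ there. Instead I would read off the sign at the supersingular point. By \isog{iii} the isogeny $\psi$ restricts over $\Spec\BF_3$ to the Frobenius $F$ of $C_0$, and $\psi'$ is built by the identical recipe, so it restricts to $F$ as well; thus $\psi'\circ\psi$ restricts to $F^2$ while $\hat\psi$ restricts to $\hat F$. A direct point count for $C_0\co y^2+xy-y=x^3-x^2$ gives $\#C_0(\BF_3)=4$ (the three affine points lying over $x=0,1$ together with the point at infinity), so the trace of Frobenius is $3+1-4=0$ and the characteristic equation of $F$ in $\End(C_0)$ is $F^2+3=0$. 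Hence $F^2=[-3]$ and $\hat F=-F$, giving $\psi'=-\hat\psi$ and $\psi'\circ\psi=[-3]$ at the supersingular point.

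Finally I would globalize. Writing $\psi'=\epsilon\,\hat\psi$ with $\epsilon\in\{\pm1\}$ after the target identification, the element $\epsilon$ is a section of the constant group scheme $\{\pm1\}$ over the connected base $\s_3$ and so is locally constant; the supersingular computation forces $\epsilon=-1$, whence $\epsilon=-1$ throughout $\Spec\s_3$, i.e.\ $\psi'\circ\psi=[-3]$ and \eqref{frob^2} commutes. The one delicate point is that $C_0$ carries automorphisms beyond $\pm1$, so strictly I would run the locally-constant argument over the complement of the supersingular locus, where $\Aut(C)=\{\pm1\}$, and then conclude by continuity --- equivalently by separatedness, since two isogenies with the common kernel $C[3]$ that agree on a dense open of $\Spec\s_3$ agree everywhere.
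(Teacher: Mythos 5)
Your proposal is correct and has the same two-step skeleton as the paper's proof: pin down the composite on the supersingular fiber, then spread the identity over the connected base. Within that skeleton there are two real differences. For the supersingular computation, your route is cleaner than the paper's: you count $\#C_0(\BF_3)=4$ directly (which checks out: two points over $x=0$, one over $x=1$, none over $x=2$, plus infinity), so ${\rm trace}(\psi_0)=3+1-4=0$ and $\psi_0^2=[-3]$ at once. The paper instead combines the Weil bound $({\rm trace}\,\psi_0)^2\leq 12$, the supersingular congruence ${\rm trace}\,\psi_0\equiv 0 \md 3$, and an exclusion of ${\rm trace}\,\psi_0=\pm 3$ by checking the action on the $2$-torsion point $(1,0)$; your point count replaces all three steps. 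For the globalization, the paper simply cites \cite[2.4.2]{KM}: two homomorphisms of elliptic curves over a connected base that agree at one geometric point agree everywhere. Since $\psi'\circ\psi$ and $[-3]$ are themselves two such homomorphisms, that citation makes your bookkeeping with $\hat\psi$ and the sign $\epsilon$ unnecessary.

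The one inaccuracy is in your hand-rolled rigidity argument: it is not true that $\Aut(C)=\{\pm 1\}$ on the complement of the supersingular locus, because $\Spec \s_3$ also has geometric fibers with $j=0$ or $j=1728$ (in characteristic $0$ and residue characteristics $\geq 5$), where extra automorphisms appear. The gap is repairable: the automorphism scheme of $C$ over $\s_3$ is finite \'etale, so the loci where $\epsilon=+1$, where $\epsilon=-1$, and where $\epsilon$ lies outside $\{\pm 1\}$ are each open and closed in the base; connectedness plus $\epsilon={\rm id}$ at the supersingular point then forces $\epsilon={\rm id}$ everywhere, with no need to excise any locus. That open-and-closed argument is essentially the content of the Katz--Mazur rigidity statement the paper invokes, so the cleanest fix is to quote it outright.
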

\begin{proof}
 By \cite[2.4.2]{KM}, since $\Proj \s_3$ is connected, we need only show 
 that the locus over which $\psi' \circ \psi = [-3]$ is not empty, where 
 by abuse of notation $[-3]$ denotes the map $[-3]$ on $C$ composed with 
 the canonical isomorphism from $C/C[3]$ to $C'/G'$.  

 Recall from Section \ref{subsec:ec} that $C$ restricts to the 
 supersingular locus $\BF_3$ as 
 \[
  C_0 \co y^2 + x y - y = x^3 - x^2.  
 \]
 By \isog{iii} both $\psi$ and $\psi'$ restrict as the 3-power Frobenius 
 endomorphism $\psi_0$.  By \cite[2.6.3]{KM}, in the endomorphism ring 
 of $C_0$, $\psi_0$ is a root of the polynomial 
 \begin{equation}
 \label{charpoly}
  X^2 - {\rm trace}(\psi_0) \cdot X + 3 
 \end{equation}
 with ${\rm trace}(\psi_0)$ an integer satisfying 
 \[
  \big( {\rm trace}(\psi_0) \big)^2 \leq 12.  
 \]
 Moreover by \cite[Exercise 5.10a]{AEC}, since $C_0$ is supersingular, 
 we have 
 \[
  {\rm trace}(\psi_0) \equiv 0 \md 3.  
 \]
 Thus ${\rm trace}(\psi_0) = 0$, 3, or $-3$.  We exclude the latter two 
 possibilities by checking the action of $\psi_0$ at the 2-torsion point 
 $(1,0)$.  It then follows from \eqref{charpoly} that 
 $\psi_0 \circ \psi_0$ agrees with $[-3]$ on $C_0$ over $\BF_3$.  
\end{proof}

Analogous to \isog{iv}, let $\K'$ be the element in $\s_3$ such that 
$(\psi')^*$ sends $du$ to $\K' du$.  Note that $|\K'| = -6$.  

\begin{cor}
\label{cor:K'}
 The following relations hold in $\s_3$: 
 \[
  b^4 \K \K' + 3 = 0 
 \]
 and 
 \[
  \K' = -\K^3 + \frac{6}{b^2} ~ \K - \frac{a^2 - 8 b}{b^4}.  
 \]
\end{cor}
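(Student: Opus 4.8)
The plan is to derive both identities from Proposition~\ref{prop:frob^2} by passing to relative cotangent spaces at the identity. Work over $\s_3$, and for each of the curves $C$, $C'=C/G$, and $C''\coloneqq C'/G'$ --- all written in the normalized form \eqref{Cuv} --- track the cotangent vector $du$ at the identity. By \isog{iv} the map $\psi^*$ sends $du$ to $\K\,du$, and by the definition of $\K'$ the map $(\psi')^*$ sends $du$ to $\K'\,du$. Since pullback is contravariant, the composite $(\psi'\circ\psi)^*=\psi^*\circ(\psi')^*$ sends $du$ on $C''$ to $\K\K'\,du$ on $C$.

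Now bring in Proposition~\ref{prop:frob^2}. Let $\bar\eta\co C''\to C$ be the canonical isomorphism appearing there, so that $\bar\eta\circ\psi'\circ\psi=[-3]$ as a self-map of $C$. The derivative of $[m]$ at the identity is multiplication by $m$, so $[-3]^*$ scales $du$ by $-3$; writing $\bar\eta^*(du)=c\,du$ and applying the chain rule to $\bar\eta\circ\psi'\circ\psi$ yields $c\,\K\K'=-3$. To evaluate $c$ I make $\bar\eta$ explicit. Running the construction of \isog{i} with source $C'$ presents $C''$ in the form \eqref{Cuv} with parameter $b''=(b')^3=b^9$, whereas as a copy of $C$ its parameter is $b$; the unique coordinate rescaling carrying $b^9$ to $b$ multiplies $du$ by $b^4$, and matching the $a$-parameters confirms that this rescaling is the entire isomorphism. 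Hence $c=b^4$, and the first relation $b^4\K\K'+3=0$ follows.

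The second relation is then pure algebra over $\s_3$. By \isog{i} we have $W(\K)=0$; clearing denominators and factoring out $\K$ turns this into $b^4\K\bigl(\K^3-\frac{6}{b^2}\K+\frac{a^2-8b}{b^4}\bigr)=3$. Adding the first relation $b^4\K\K'=-3$ gives $b^4\K\bigl(\K'+\K^3-\frac{6}{b^2}\K+\frac{a^2-8b}{b^4}\bigr)=0$. Because $W$ is monic with nonzero constant term, multiplication by $\K$ on the free $\s$-module $\s_3$ is injective, so $\K$ is a non-zero-divisor; as $b$ is invertible in $\s$, the second factor must vanish, giving $\K'=-\K^3+\frac{6}{b^2}\K-\frac{a^2-8b}{b^4}$. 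The one genuinely delicate step is the evaluation $c=b^4$: homogeneity alone pins $c$ down only up to a degree-zero unit, so one must actually exhibit the canonical isomorphism $C''\cong C$ and verify both its scaling factor and its sign against the explicit formulas of \isog{i} --- get the sign wrong and one lands on $b^4\K\K'=3$ instead.
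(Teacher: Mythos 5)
Your skeleton is the same as the paper's: pass to relative cotangent spaces in the diagram of Proposition \ref{prop:frob^2}, obtain $c\,\K\K' = -3$ where $c$ is the cotangent scaling of the canonical identification of $C'/G'$ with $C$, and then deduce the second relation from the first together with $W(\K)=0$. Your second half is complete and correct (the constant term $-3/b^4$ of $W$ is a non-zero-divisor in the domain $\s$ and $b$ is invertible, so $\K$ can be cancelled), and it matches the paper, which likewise gets the second relation "by a computation from the first relation and $W(\K)=0$".

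The gap is the evaluation $c = b^4$, which you assert and then concede you have not verified; conceding the delicacy does not close it, and the route you sketch for closing it does not work. First, to know that the canonical isomorphism is a pure coordinate rescaling you must know that it carries the marked order-4 point $(0,0)$ of the presentation of $C''$ obtained by applying \isog{i} twice to the marked point of $C$, rather than to its negative; the two possibilities are exactly the two signs $\pm b^4$, and nothing in \isog{i} addresses level structures, so this is precisely the sign question again, not a resolution of it. Second, "matching the $a$-parameters" is circular: the formula of \isog{i} applied to $C'$ expresses $a''$ as a polynomial in $a'$, $b'$, and $\K'$, so deciding between $a'' = ab^4$ and $a'' = -ab^4$ requires knowing $\K'$ in terms of $\K$ --- which is the second relation you are trying to prove. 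The paper pins down the scaling (including its sign) by an independent computation: it forms the isogeny $\Psi$ built from all four order-3 subgroups, so that by \eqref{norm} its cotangent scaling is $\K_1\K_2\K_3\K_4 = -3/b^4$, the product of the roots of $W$ in \eqref{W}; since $[3]$ has the same kernel, $[3]^*(du) = s\cdot(-3/b^4)\,du$ with $s$ a degree-zero unit coming from an automorphism of $C$, and $s = -1$ is then fixed by restricting to the ring $S$ of \eqref{S}, where $[3]^*$ becomes multiplication by $3$ and $-3/b^4$ becomes $-3$ (the constant term in \eqref{w}). Some determination of this kind, using input independent of the relation being proved (restriction to $S$ or to the supersingular locus), is the missing content of your proof.
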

\begin{proof}
 The isogenies in \eqref{frob^2} induce maps on relative cotangent 
 spaces at the identity.  By \isog{iv} we then have a commutative 
 diagram 
 \begin{center}
 \begin{tikzpicture}
         \node (LT) at (0, 2) {$\K \K' du$}; 
         \node (RT) at (4.65, 2) {$\K' du$}; 
         \node (LB) at (1.9, 0) {$du$}; 
         \node (RB) at (4.65, 0) {$du$}; 
         \node at (4.95, -0.125) {.}; 
         \draw [|->] (RT) -- node [above] {$\scriptstyle \psi^*$} (LT); 
         \draw [|->] (LB) -- node [left] {$\scriptstyle [-3]^*$} (LT); 
         \draw [|->] (RB) -- node [right] {$\scriptstyle (\psi')^*$} (RT); 
         \draw [double distance=1.3pt] (LB) -- (RB); 
 \end{tikzpicture}
 \end{center}
 Thus for the first stated relation we need only show that $[3]^*$ sends 
 $du$ to $3 du / b^4$.  

 For $i = 1$, 2, 3, and 4, let $Q_i$ be a generator for each of the four 
 order-3 subgroups of $C$.  Each $Q_i$ can be chosen as $Q$ in 
 \eqref{u'v'}, and we denote the corresponding quantity $\K$ in 
 \eqref{KL} by $\K_i$.  Define an isogeny $\Psi$ with source $C$ by 
 \begin{equation*}
 \begin{split}
  u\big( \Psi(P) \big) \coloneqq & ~ u(P) \prod_{i=1}^4 \big( u(P-Q_i) \cdot u(P+Q_i) \big), \\
  v\big( \Psi(P) \big) \coloneqq & ~ v(P) \prod_{i=1}^4 \big( v(P-Q_i) \cdot v(P+Q_i) \big).  
 \end{split}
 \end{equation*}
 In view of \eqref{norm}, since $[3]$ has the same kernel as $\Psi$, we 
 have 
 \begin{equation}
 \label{s}
  [3]^* (du) = s \cdot \K_1 \K_2 \K_3 \K_4 \cdot du 
 \end{equation}
 where $s$ is a degree-0 unit in $\s$ coming from an automorphism of $C$ 
 over $\s$.  In view of \eqref{W} we have 
 \[
  \K_1 \K_2 \K_3 \K_4 = -\frac{3}{b^4}.  
 \]
 We then compute that $s = -1$ by comparing the restrictions of the two 
 sides of \eqref{s} to $S$ (see \eqref{S} for the definition of $S$): 
 $[3]^*$ becomes the multiplication-by-3 map, and $-3 / b^4$ becomes 
 $-3$ (cf.~the constant term in \eqref{w}).  Thus $[3]^*$ sends $du$ to 
 $3 du / b^4$.  

 The second stated relation follows by a computation from the first 
 relation and the relation $W(\K) = 0$ as in \isog{i}.  
\end{proof}

\begin{rmk}
\label{rmk:KK'}
 As noted in Remark \ref{rmk:K}, the (local) analog of $\K$ at the prime 
 2 coincides with the parameter $d$ in \cite[Section 3]{h2p2}.  In 
 particular, with the notations there and the equation in 
 \cite[Proposition 3.2]{tmf3}, $d$ and $d'$ satisfy an analogous 
 relation $A_3 d d' + 2 = 0$ which locally reduces to $d d' + 2 = 0$ 
 (the analog of the factor $s$ in the proof of Corollary \ref{cor:K'} 
 equals 1; cf.~\cite[Theorem 2.5.7]{andoduke}).  These arise as examples 
 of \cite[Lemma 3.21]{poonen}.  
\end{rmk}

\begin{rmk}
\label{rmk:K'}
 In view of \eqref{frob^2}, $-\psi'$ (composed with the canonical 
 isomorphism on the target) turns out to be the dual isogeny of $\psi$ 
 (cf.~the proof of \cite[2.9.4]{KM}).  If $G$ is the unique order-3 
 subgroup of $C$ in a formal neighborhood of the identity, then 
 \[
  \K \equiv 0 \md 3 
 \]
 by Remark \ref{rmk:dmod3} and \eqref{K}.  Thus in view of Corollary 
 \ref{cor:K'} and \eqref{H} we have 
 \[
  -\K' = \K^3 - \frac{6}{b^2} ~ \K + \frac{a^2 - 8 b}{b^4} \equiv \frac{H}{b^4} \md 3.  
 \]
 This congruence agrees with the interpretation of $H$ as defined by the 
 tangent map of the Verschiebung isogeny over $\BF_3$ (see 
 \cite[12.4.1]{KM}).  
\end{rmk}

\subsection{Individual power operations}

Let $A$ be a $K(2)$-local commutative $E$-algebra.  By \cite[3.23]{cong} 
and Corollary \ref{cor:psi3}, we have a total power operation 
\[
 \p \co A_0 \to A_0 \otimes_{E_0} (E^0 B\Sigma_3 / I) \cong A_0 [\A] \big/ \big( w(\A) \big).  
\]
We also have a composite of total power operations 
\begin{equation}
\label{psi3^2}
\begin{split}
 A_0 \stackrel{\p}{\longrightarrow} A_0 \otimes_{E_0} (E^0 B\Sigma_3 / I) \stackrel{\p}{\longrightarrow} 
 & ~ \big( A_0 \otimes_{E_0} (E^0 B\Sigma_3 / I) \big) \tensor[^\p]{\otimes}{_{E_0 [\A]}} (E^0 B\Sigma_3 / I) \\
 \cong \thinspace \thinspace & ~ \Big( A_0 [\A] \big/ \big( w(\A) \big) \Big) \tensor[^\p]{\otimes}{_{E_0 [\A]}} \Big( E^0 [\A] \big/ \big( w(\A) \big) \Big), 
\end{split}
\end{equation}
where the elements in the target $M \tensor[^\p]{\otimes}{_R} N$ are 
subject to the equivalence relation (as well as other ones in a usual 
tensor product) 
\[
 m \otimes (r \cdot n) \sim \big( m \cdot \p(r) \big) \otimes n 
\]
for $m \in M$, $n \in N$, and $r \in R$ with 
\[
 \p(\A) = -\A^3 + 6 \A - h + 9 
\]
in view of Corollary \ref{cor:K'}.  

\begin{defn}
 Define the {\em individual power operations} 
 \[
  Q_k \co A_0 \to A_0 
 \]
 for $k = 0$, 1, 2, and 3 by 
 \begin{equation}
 \label{Q_k}
  \p (x) = Q_0(x) + Q_1(x) \A + Q_2(x) \A^2 + Q_3(x) \A^3.  
 \end{equation}
\end{defn}

\begin{prop}
\label{prop:Q}
 The following relations hold among the individual power operations 
 $Q_0$, $Q_1$, $Q_2$, and $Q_3$: 
 \begin{enumerate}[(i)]
  \item \label{Q(i)} $Q_0(1) = 1, \quad Q_1(1) = Q_2(1) = Q_3(1) = 0;$ 

  \item \label{Q(ii)} $Q_k(x+y) = Q_k(x) + Q_k(y) \text{~for all~} k;$ 

  \item \label{Q(iii)} {\em Commutation relations }
  \begin{equation*}
  \begin{split}
   Q_0(h x) = & ~ (h^3 - 27 h^2 + 201 h - 342) Q_0(x) + (3 h^2 - 54 h + 171) Q_1(x) \qquad \qquad \\
              & + (9 h - 81) Q_2(x) + 24 Q_3(x), \\
   Q_1(h x) = & ~ (-6 h^2 + 108 h - 334) Q_0(x) + (-18 h + 171) Q_1(x) + (-72) Q_2(x) \\
              & + (h - 9) Q_3(x), \\
   Q_2(h x) = & ~ (3 h - 27) Q_0(x) + 8 Q_1(x) + 9 Q_2(x) + (-24) Q_3(x), \\
   Q_3(h x) = & ~ (h^2 - 18 h + 57) Q_0(x) + (3 h - 27) Q_1(x) + 8 Q_2(x) + 9 Q_3(x), \\
   Q_0(c x) = & ~ (c^3 - 12 c + 12 c^{-1}) Q_0(x) + (3 c - 12 c^{-1}) Q_1(x) + (12 c^{-1}) Q_2(x) \\
              & + (-12 c^{-1}) Q_3(x), \\
   Q_1(c x) = & ~ (-6 c + 20 c^{-1}) Q_0(x) + (-20 c^{-1}) Q_1(x) + (- c + 20 c^{-1}) Q_2(x) \\
              & + (4 c - 20 c^{-1}) Q_3(x), \\
   Q_2(c x) = & ~ (4 c^{-1}) Q_0(x) + (-4 c^{-1}) Q_1(x) + (4 c^{-1}) Q_2(x) + (- c - 4 c^{-1}) Q_3(x), \\
   Q_3(c x) = & ~ (c - 4 c^{-1}) Q_0(x) + (4 c^{-1}) Q_1(x) + (-4 c^{-1}) Q_2(x) + (4 c^{-1}) Q_3(x), \\
   Q_k(i x) = & ~ (-i) Q_k(x) \text{~for all~} k; \\
  \end{split}
  \end{equation*}

  \item \label{Q(iv)} {\em Adem relations }
  \begin{equation*}
  \begin{split}
   Q_1Q_0(x) = & ~ (-6) Q_0Q_1(x) + 3 Q_2Q_1(x) + (6 h - 54) Q_0Q_2(x) + 18 Q_1Q_2(x) \\
               & + (-9) Q_3Q_2(x) + (-6 h^2 + 108 h - 369) Q_0Q_3(x) \\
               & + (-18 h + 162) Q_1Q_3(x) + (-54) Q_2Q_3(x), \\
   Q_2Q_0(x) = & ~ 3 Q_3Q_1(x) + (-3) Q_0Q_2(x) + (3 h - 27) Q_0Q_3(x) + 9 Q_1Q_3(x), \qquad \qquad \\
   Q_3Q_0(x) = & ~ Q_0Q_1(x) + (-h + 9) Q_0Q_2(x) + (-3) Q_1Q_2(x) \\
               & + (h^2 - 18 h + 63) Q_0Q_3(x) + (3 h - 27) Q_1Q_3(x) + 9 Q_2Q_3(x); 
  \end{split}
  \end{equation*}

  \item \label{Q(v)} {\em Cartan formulas }
  \begin{equation*}
  \begin{split}
   Q_0(xy) = & ~ Q_0(x) Q_0(y) + 3 \big( Q_3(x) Q_1(y) + Q_2(x) Q_2(y) + Q_1(x) Q_3(y) \big) \\
             & + 18 Q_3(x) Q_3(y), \\
   Q_1(xy) = & ~ \big( Q_1(x) Q_0(y) + Q_0(x) Q_1(y) \big) \\
             & + (-h + 9) \big( Q_3(x) Q_1(y) + Q_2(x) Q_2(y) + Q_1(x) Q_3(y) \big) \\
             & + 3 \big( Q_3(x) Q_2(y) + Q_2(x) Q_3(y) \big) + (-6 h + 54) Q_3(x) Q_3(y), \qquad \qquad \qquad
  \end{split}
  \end{equation*}
  \begin{equation*}
  \begin{split}
   Q_2(xy) = & ~ \big( Q_2(x) Q_0(y) + Q_1(x) Q_1(y) + Q_0(x) Q_2(y) \big) \\
             & + 6 \big( Q_3(x) Q_1(y) + Q_2(x) Q_2(y) + Q_1(x) Q_3(y) \big) \\
             & + (-h + 9) \big( Q_3(x) Q_2(y) + Q_2(x) Q_3(y) \big) + 39 Q_3(x) Q_3(y), \\
   Q_3(xy) = & ~ \big( Q_3(x) Q_0(y) + Q_2(x) Q_1(y) + Q_1(x) Q_2(y) + Q_0(x) Q_3(y) \big) \qquad \qquad \qquad \\
             & + 6 \big( Q_3(x) Q_2(y) + Q_2(x) Q_3(y) \big) + (-h + 9) Q_3(x) Q_3(y); 
  \end{split}
  \end{equation*}

  \item \label{Q(vi)} {\em The Frobenius congruence }
  \begin{equation*}
   Q_0(x) \equiv x^3 \md 3.  \qquad \qquad \qquad \qquad \qquad \qquad \qquad \qquad \qquad \qquad \qquad \qquad
  \end{equation*}
 \end{enumerate}
\end{prop}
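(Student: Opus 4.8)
The plan is to exploit that the total power operation $\p$ is a ring homomorphism (being the total power operation for a commutative $E$-algebra, by \cite[3.23]{cong}) landing in the free rank-$4$ module $A_0 [\A] / (w(\A))$ with $A_0$-basis $1, \A, \A^2, \A^3$; by \eqref{Q_k} the operations $Q_0, \ldots, Q_3$ are exactly the coordinate functions of $\p$ in this basis. The single computational device throughout is the reduction rule $\A^4 = 6 \A^2 - (h-9) \A + 3$, obtained from $w(\A) = 0$ in \eqref{w} together with $c^2 = h - 1$, which also yields $\A^5 = 6 \A^3 - (h-9) \A^2 + 3 \A$ and $\A^6 = -(h-9) \A^3 + 39 \A^2 - 6(h-9) \A + 18$. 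Statements \eqref{Q(i)} and \eqref{Q(ii)} then fall out immediately: applying $\p(1) = 1$ and $\p(x+y) = \p(x) + \p(y)$ and comparing coefficients of each $\A^k$ gives the normalization and additivity.

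For the commutation relations \eqref{Q(iii)} I would use multiplicativity in the form $\p(rx) = \p(r)\,\p(x)$ for $r \in \{h, c, i\}$, reading $\p(r)$ off Corollary \ref{cor:psi3} and writing $\p(x) = \sum_k Q_k(x) \A^k$. Expanding the product, reducing modulo $w(\A)$ by the rule above, and comparing coefficients of $\A^j$ expresses $Q_j(rx)$ as the image of $\bigl(Q_0(x), \ldots, Q_3(x)\bigr)$ under the $4 \times 4$ matrix of multiplication-by-$\p(r)$ in the chosen basis; the last line is the trivial case $\p(i) = -i$. The Cartan formulas \eqref{Q(v)} come the same way from $\p(xy) = \p(x)\,\p(y)$: expanding $\bigl( \sum_i Q_i(x) \A^i \bigr) \bigl( \sum_j Q_j(y) \A^j \bigr) = \sum_{i,j} Q_i(x) Q_j(y)\, \A^{i+j}$, reducing the powers $\A^4, \A^5, \A^6$, and collecting the coefficient of each $\A^k$ produces the stated formulas, with the constants $3$, $18$, $-h+9$, $6$, $39$ tracing back to the reduced forms of $\A^4$ and $\A^6$. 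The Frobenius congruence \eqref{Q(vi)} is then short: by \eqref{Amod3} we have $\A \equiv 0 \md 3$, so modulo $3$ every term $Q_k(x) \A^k$ with $k \geq 1$ vanishes and $\p(x) \equiv Q_0(x) \md 3$; since $\p$ reduces to the Frobenius $x \mapsto x^3$ modulo $3$ (the property cited from \cite[Propositions 3.25 and 10.5]{cong} in the proof of Corollary \ref{cor:psi3}), we conclude $Q_0(x) \equiv x^3 \md 3$.

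The real work is in the Adem relations \eqref{Q(iv)}, and this is the step I expect to be the main obstacle. Here I would study the composite $\p \circ \p$ of \eqref{psi3^2}, landing in the twisted tensor product $T$ whose defining relation $m \otimes (r \cdot n) \sim (m \cdot \p(r)) \otimes n$ is controlled by $\p(\A) = -\A^3 + 6 \A - h + 9$ (from Corollary \ref{cor:K'}). Writing $\A_1, \A_2$ for the first- and second-stage generators and using that each $\p$ is a ring homomorphism, the composite expands as $\p\,\p(x) = \sum_{i,j} Q_i Q_j(x)\, \A_2^{\,i} \A_1^{\,j}$, where $Q_i Q_j \coloneqq Q_i \circ Q_j$. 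The twist forces $1 \otimes \A_2 = \p(\A_1) \otimes 1$ in $T$, so that, after substituting $\A_2 = -\A_1^3 + 6 \A_1 - h + 9$ and reducing modulo $w(\A_1)$, each of the sixteen monomials $\A_2^{\,i} \A_1^{\,j}$ is expressed within a small explicit basis of $T$. The Adem relations are the dependencies among the iterated operations $Q_i Q_j$ forced by this collapse together with the symmetry of the double operation guaranteed by Proposition \ref{prop:frob^2}, where $\psi' \circ \psi = [-3]$ makes the pair cyclic in standard order; concretely they solve the three \emph{inadmissible} composites $Q_1 Q_0$, $Q_2 Q_0$, $Q_3 Q_0$ in terms of the twelve \emph{admissible} ones $Q_i Q_j$ with $j \geq 1$. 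The delicate points are the bookkeeping in the twisted tensor product --- in particular getting the direction of the twist and the identification $\A_2 = \p(\A_1)$ right --- and verifying that the module structure of $T$ isolates precisely these three relations rather than an over- or under-determined system. It is for this reduction that the cotangent-space choice of $\A$ (Remark \ref{rmk:parameter}), which makes $\p(\A)$ so simple, is what keeps the computation tractable.
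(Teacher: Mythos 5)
Parts (i), (ii), (iii), (v), and (vi) of your plan coincide with the paper's proof: they follow computationally from Corollary \ref{cor:psi3}, the fact that $\p$ is a ring homomorphism, reduction modulo $w(\A)$, and, for (vi), the congruences \eqref{Amod3} and $\p(x) \equiv x^3 \md 3$. Your strategy for the Adem relations is also the paper's in outline --- expand the composite \eqref{psi3^2} using the twist $\p(\A) = -\A^3 + 6\A - h + 9$ from Corollary \ref{cor:K'}, and use Proposition \ref{prop:frob^2} together with \cite[Theorem B]{cong} to force the composite to land in $A_0$, so that the coefficients of $\A$, $\A^2$, $\A^3$ vanish. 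But your expansion of the composite is transposed, and the transposition is fatal to the computation.

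You write $\p\p(x) = \sum_{i,j} Q_iQ_j(x)\, \A_2^{\,i}\A_1^{\,j}$ and then substitute $\A_2 = -\A_1^3 + 6\A_1 - h + 9$, so in your expression the \emph{outer} index lands on the twisted cubic and the inner index on the plain powers. The correct expansion is the opposite pairing: applying the second $\p$ as a ring homomorphism to $\p(x) = \sum_k Q_k(x)\A^k$ gives
\[
 \p\big(\p(x)\big) \;=\; \sum_{k=0}^{3} \p\big(Q_k(x)\big)\big(\p(\A)\big)^{k}
 \;=\; \sum_{j,k} Q_jQ_k(x)\, \A^{j}\, (-\A^3 + 6\A - h + 9)^{k},
\]
with the \emph{inner} index $k$ on $\p(\A)^k$ and the outer expansion $\p(Q_k(x)) = \sum_j Q_jQ_k(x)\A^j$ on plain $\A^j$. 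This is forced by contravariance: in the chain $C \to C/G \to (C/G)/G'$, the operation attached to the second isogeny --- the one whose parameter is $\K'$, i.e.\ $\p(\A)$ --- is applied first, so it is the inner operation whose variable is the twisted one. A quick independent check: $K(1)$-locally, with $\alpha_0$ the canonical root of $w$, one has $\psi_F^3\psi_F^3(x) = \sum_k \psi_F^3(Q_k(x))\,\psi_F^3(\alpha_0)^k = \sum_{j,k} Q_jQ_k(x)\,\alpha_0^j\,\psi_F^3(\alpha_0)^k$, and the twisted element $\psi_F^3(\alpha_0)$ carries the inner index. Since $\A^j\p(\A)^k = \p(\A)^k\A^j$, your expression is the correct one with every composite $Q_jQ_k$ replaced by $Q_kQ_j$, so your three vanishing conditions are the transposes of the true Adem relations: in your version the unit coefficients sit on the admissible composites $Q_0Q_1$, $Q_0Q_2$, $Q_0Q_3$ (the coefficient of $\A^m$ in $\p(\A)^0\A^j = \A^j$), and the relations would ``solve'' those in terms of composites involving the inadmissibles --- contradicting both the relations stated in \eqref{Q(iv)} and the freeness of the admissible monomials (Remark \ref{rmk:rank}, Theorem \ref{thm:gamma}). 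With the pairing corrected, the rest of your argument --- the composite corresponds to $[-3] \co C \to C/C[3] \cong C$, hence lands in $A_0$, hence $\Psi_1 = \Psi_2 = \Psi_3 = 0$ --- goes through and yields exactly the three stated relations.
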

\begin{proof}
 The relations in \eqref{Q(i)}, \eqref{Q(ii)}, \eqref{Q(iii)}, and 
 \eqref{Q(v)} follow computationally from the formulas in Corollary 
 \ref{cor:psi3} together with the fact that $\p$ is a ring homomorphism.  

 For \eqref{Q(iv)}, there is a canonical isomorphism $C/C[3] \cong C$ of 
 elliptic curves.  Given the correspondence between deformations of 
 Frobenius and power operations in \cite[Theorem B]{cong}, the 
 commutativity of \eqref{frob^2} then implies that the composite 
 \eqref{psi3^2} lands in $A_0$.  In terms of formulas, we have 
 \begin{equation*}
 \begin{split}
  \p \big( \p(x) \big) = & ~ \p \big( Q_0(x) + Q_1(x) \A + Q_2(x) \A^2 + Q_3(x) \A^3 \big) \\
                       = & ~ \sum_{k = 0}^3 \p \big( Q_k(x) \big) \big( \p(\A) \big)^k \\
                       = & ~ \sum_{k = 0}^3 \sum_{j = 0}^3 Q_jQ_k(x) \A^j (-\A^3 + 6 \A - h + 9)^k \\
                  \equiv & ~ \Psi_0(x) + \Psi_1(x) \A + \Psi_2(x) \A^2 + \Psi_3(x) \A^3 \md \big( w(\A) \big) 
 \end{split}
 \end{equation*}
 where each $\Psi_i$ is an $E_0$-linear combination of the $Q_jQ_k$'s.  
 The vanishing of $\Psi_1(x)$, $\Psi_2(x)$, and $\Psi_3(x)$ gives the 
 three relations in \eqref{Q(iv)}.  

 For \eqref{Q(vi)}, by \cite[Propositions 3.25 and 10.5]{cong} we have 
 \[
  \p(x) \equiv x^3 \md 3.  
 \]
 In view of \eqref{Amod3}, the congruence in \eqref{Q(vi)} then follows 
 from \eqref{Q_k}.  
\end{proof}

\begin{ex}
\label{ex}
 We have $E^0 S^2 \cong \BZ_9 \llbracket h \rrbracket [u] / (u^2)$.  By 
 definition of $\K$ in \eqref{KL}, the $Q_k$'s act canonically on 
 $u \in E^0 S^2$: 
 \[
  Q_k(u) = \left\{
  \begin{array}{ll}
    u,  & \quad {\rm if}~k = 1, \\
    0,  & \quad {\rm if}~k \neq 1.  \\
  \end{array}
  \right.
 \]
 We then get the values of the $Q_k$'s on elements in $E^0 S^2$ from 
 \q{i}-\eqref{Q(iii)}.  
\end{ex}

\subsection{The \DL algebra}

\begin{defn}
\label{def:go}
 \mbox{}
 \begin{enumerate}[(i)]
  \item \label{go(i)} Let $i$ be an element generating $\BZ_9$ over 
  $\BZ_3$ with $i^2 = -1$.  Define $\g$ to be the associative ring 
  generated over $\BZ_9 \llbracket h \rrbracket$ by elements $q_0$, 
  $q_1$, $q_2$, and $q_3$ subject to the following relations: the 
  $q_k$'s commute with elements in 
  $\BZ_3 \subset \BZ_9 \llbracket h \rrbracket$, and satisfy {\em 
  commutation relations} 
  \begin{equation*}
  \begin{split}
   q_0 h = & ~ (h^3 - 27 h^2 + 201 h - 342) q_0 + (3 h^2 - 54 h + 171) q_1 + (9 h - 81) q_2 \\
           & + 24 q_3, \\
   q_1 h = & ~ (-6 h^2 + 108 h - 334) q_0 + (-18 h + 171) q_1 + (-72) q_2 + (h - 9) q_3, \\
   q_2 h = & ~ (3 h - 27) q_0 + 8 q_1 + 9 q_2 + (-24) q_3, \\
   q_3 h = & ~ (h^2 - 18 h + 57) q_0 + (3 h - 27) q_1 + 8 q_2 + 9 q_3, \\
   q_k i ~ = & ~ (-i) q_k \text{~for all~} k, 
  \end{split}
  \end{equation*}
  and {\em Adem relations} 
  \begin{equation*}
  \begin{split}
   q_1q_0 = & ~ (-6) q_0q_1 + 3 q_2q_1 + (6 h - 54) q_0q_2 + 18 q_1q_2 + (-9) q_3q_2 \\
            & + (-6 h^2 + 108 h - 369) q_0q_3 + (-18 h + 162) q_1q_3 + (-54) q_2q_3, \quad~~ \\
   q_2q_0 = & ~ 3 q_3q_1 + (-3) q_0q_2 + (3 h - 27) q_0q_3 + 9 q_1q_3, \\
   q_3q_0 = & ~ q_0q_1 + (-h + 9) q_0q_2 + (-3) q_1q_2 + (h^2 - 18 h + 63) q_0q_3 \\
            & + (3 h - 27) q_1q_3 + 9 q_2q_3.  
  \end{split}
  \end{equation*}

  \item \label{go(ii)} Write $\omega \coloneqq \pi_2 E$ which is the 
  kernel of $E^0 S^2 \to E^0$ with 
  $E^0 S^2 \cong \BZ_9 \llbracket h \rrbracket [u] / (u^2)$.  Define 
  $\omega$ as a $\g$-module in the sense of \cite[2.2]{h2p2} with one 
  generator $u$ by 
  \[
   q_k \cdot u = \left\{
   \begin{array}{ll}
     u,  & \quad {\rm if}~k = 1, \\
     0,  & \quad {\rm if}~k \neq 1.  \\
   \end{array}
   \right.
  \]
 \end{enumerate}
\end{defn}

\begin{rmk}
\label{rmk:rank}
 In \go{i}, an element $r \in \BZ_9 \llbracket h \rrbracket \cong E_0$ 
 corresponds to the multiplication-by-$r$ operation (see 
 \cite[Proposition 6.4]{cong}), and each $q_k$ corresponds to the 
 individual power operation $Q_k$ (also cf.~\go{ii} and Example 
 \ref{ex}).  Under this correspondence, the relations in 
 \q{ii}-\eqref{Q(v)}  describe explicitly the structure of $\g$ as that 
 of a {\em graded twisted bialgebra over $E_0$} in the sense of 
 \cite[Section 5]{cong}.  The grading of $\g$ comes from the number of 
 the $q_k$'s in a monomial: for example, commutation relations are in 
 degree 1, and Adem relations are in degree 2.  Under these relations, 
 $\g$ has an {\em admissible basis}: it is free as a left $E_0$-module 
 on the elements of the form 
 \[
  q_0^m q_{k_1} \cdots q_{k_n} 
 \]
 where $m, n \geq 0$ ($n = 0$ gives $q_0^m$), and $k_i = 1$, 2, or 3.  
 If we write $\g[d]$ for the degree-$d$ part of $\g$, then $\g[d]$ is of 
 rank $1 + 3 + \cdots + 3^d$.  
\end{rmk}

We now identify $\g$ with the \DL algebra of power operations on 
$K(2)$-local commutative $E$-algebras.  

\begin{thm}
\label{thm:gamma}
 Let $A$ be a $K(2)$-local commutative $E$-algebra.  Let $\g$ be the 
 graded twisted bialgebra over $E_0$ as defined in \go{i}, and $\omega$ 
 be the $\g$-module in \go{ii}.  Then $A_*$ is an {\em $\omega$-twisted 
 $\BZ/2$-graded amplified $\g$-ring} in the sense of 
 \cite[Section 2]{cong} and \cite[2.5 and 2.6]{h2p2}.  In particular, 
 \[
  \pi_* L_{K(2)} \BP_E (\Sigma^d E) \cong \big( F_d \big)_{(3,h)}^\wedge, 
 \]
 where $F_d$ is the free $\omega$-twisted $\BZ/2$-graded amplified 
 $\g$-ring with one generator in degree $d$.  
\end{thm}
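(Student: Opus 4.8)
The goal is to identify $A_*$ with an algebraic structure over $\g$, and then to identify the free such structure with the homotopy of the $K(2)$-localized free commutative $E$-algebra. The plan is to verify the definitional axioms of \cite[Section 2]{cong} and \cite[2.5 and 2.6]{h2p2} one at a time, and then to invoke Rezk's congruence criterion \cite[Theorem A]{cong} to deduce the free-object statement.

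Let me think about what needs to happen. First I would recall from \go{i} and Remark \ref{rmk:rank} that $\g$ is a graded twisted bialgebra over $E_0$ with the admissible basis described there, and that each generator $q_k$ corresponds to the operation $Q_k$ defined in \eqref{Q_k}. The first task is to check that the action of the $Q_k$'s on $A_*$ really does make $A_*$ a module over $\g$: this amounts to verifying that the defining relations of $\g$ in \go{i}---the commutation relations and the Adem relations---are satisfied by the $Q_k$ acting on $A_0$. But these are precisely \q{iii} and \q{iv}, which are already established. Next, to promote the module structure to that of a \emph{ring} over $\g$ (in the amplified, twisted sense), I would use the additivity \q{ii}, the unit normalization \q{i}, and the Cartan formulas \q{v}, which encode the compatibility of the $Q_k$ with the multiplication on $A_*$ and thereby the twisted-bialgebra comultiplication on $\g$. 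The $\BZ/2$-grading and the $\omega$-twisting come from the action on $E^0S^2$ recorded in Example \ref{ex} and Definition \ref{def:go}\thinspace\eqref{go(ii)}, so I would check that the gradings match the conventions of \cite[Section 2]{cong} and \cite[2.6]{h2p2}---this is where the choice of parameter $\K$ coming from the cotangent space (see Remark \ref{rmk:parameter}) pays off, since by \isog{iv} it transforms correctly under suspension. Finally the Frobenius congruence \q{vi} supplies the characteristic-$3$ compatibility required of the axioms.

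For the second, quantitative assertion, the strategy is to apply the general machinery of \cite{cong}. By Rezk's theorem \cite[Theorem A]{cong}, an algebra over the \DL theory for $K(2)$-local commutative $E$-algebras is the same as a $\g$-ring satisfying the congruence criterion, and the free object on one generator is computed by $\pi_* L_{K(2)}\BP_E(\Sigma^d E)$. I would match this free $\g$-ring with $F_d$ by comparing generators and relations: $F_d$ is by construction the free $\omega$-twisted $\BZ/2$-graded amplified $\g$-ring on one degree-$d$ generator, and the admissible basis of $\g$ from Remark \ref{rmk:rank}, together with the rank count $1 + 3 + \cdots + 3^d$ of $\g[d]$, should match the known additive structure of $E^* B\Sigma_{3^\bullet}$ via \cite[Theorem 1.1]{Str98} and \cite{Str98}. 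The completion at $(3,h)$ appears because $E^0$ is the complete local ring $S_{(3,h)}^\wedge$ and the extended-power construction produces completed objects.

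The main obstacle I expect is not the axiom-by-axiom verification, which is largely bookkeeping against \q{i}--\eqref{Q(vi)}, but rather the precise bridge to the general theory: one must confirm that the total power operation $\p$ of Corollary \ref{cor:psi3}, assembled from the geometry of the degree-$3$ isogeny, really does coincide with the total power operation of \cite[3.23]{cong} under the isomorphism $E^0 B\Sigma_3 / I \cong S_3$ supplied by \cite[Theorem 1.1]{Str98}, and that Rezk's congruence criterion \cite[Theorem A]{cong} applies verbatim in our setting to identify the free object. Establishing that the algebraic structure extracted here is genuinely the full \DL theory---and not merely a quotient capturing only the height-$2$ operations in degrees up to $3$---requires invoking the general result that for $K(2)$-local $E$-algebras this structure is detected by the single prime $3$ and the extended powers $\BP_E^{3^d}$, so that $\g$ as defined is the complete \DL algebra.
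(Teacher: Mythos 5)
Your proposal assembles the right ingredients for one half of the argument but leaves a genuine gap at its center. The paper's proof does not re-verify the axioms of an $\omega$-twisted $\BZ/2$-graded amplified ring on $A_*$ at all: it observes that, by Rezk's general theory, $A_*$ already carries this structure over the abstract \DL algebra $\G$ of \cite[Section 6]{cong}, and that the free $\G$-objects compute $\pi_* L_{K(2)}\BP_E(\Sigma^d E)$; so the entire content of the theorem is the identification of the explicitly presented $\g$ of \go{i} with $\G$, via the degree-preserving homomorphism $\phi \co \g \to \G$, $q_k \mapsto Q_k$. (Your appeal to \cite[Theorem A]{cong} is misplaced here: the congruence criterion characterizes which algebraic objects arise as $\G$-algebras, but it neither identifies free objects nor substitutes for knowing that $\g \cong \G$.) Your rank count---that $\g[d]$ has the admissible basis of Remark \ref{rmk:rank} of rank $1+3+\cdots+3^d$, matching the rank of $\G[d]$ coming from $E^0 B\Sigma_{3^d}$ via \cite[Theorem 1.1]{Str98}---is exactly the paper's \emph{injectivity} argument for $\phi$, so that half is in place.

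What is missing is the \emph{surjectivity} of $\phi$, i.e., the statement that $\G$ is generated in degree 1, so that every operation arising from $B\Sigma_{3^d}$ for $d \geq 2$ is an $E_0$-linear combination of composites of $Q_0,\dots,Q_3$. You flag this issue in your last paragraph but propose to dispose of it by ``invoking the general result'' that the structure is detected by the prime 3 and the extended powers $\BP_E^{3^d}$. No such general result does this job: the decomposition $\G = \bigoplus_d \G[d]$ with $\G[d]$ supported on $B\Sigma_{3^d}$ is indeed general theory, but generation of $\G$ by $\G[1]$ is a nontrivial fact requiring proof. The paper proves it by a transfer argument following \cite[Proposition 3.17]{cong}, whose hypothesis is verified by the 3-adic valuation computation $\nu_3(|\Sigma_3^{\wr d}|) = \nu_3(|\Sigma_{3^d}|) = (3^d-1)/2$. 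Without this step your comparison of free objects collapses: a priori $\g$ could map onto a proper subalgebra of $\G$ (equal ranks of free modules do not force a map between them to be an isomorphism---consider multiplication by 3 on $E_0$), in which case the free $\g$-ring $F_d$ would be strictly smaller than $\pi_* L_{K(2)}\BP_E(\Sigma^d E)$, and both assertions of Theorem \ref{thm:gamma} would fail. Note also that the order of the two halves matters: it is surjectivity combined with the equality of ranks that yields injectivity, not the other way around.
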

Formulas for $\g$ aside, this result is due to Rezk \cite{cong, h2p2}.  
\begin{proof}
 Let $\G$ be the graded twisted bialgebra of power operations on $E_0$ 
 in \cite[Section 6]{cong}.  We need only identify $\G$ with $\g$.  

 There is a direct sum decomposition $\G = \bigoplus_{d \geq 0} \G[d]$ 
 where the summands come from the completed $E$-homology of 
 $B\Sigma_{3^d}$ (see \cite[6.2]{cong}).  As in Remark \ref{rmk:rank}, 
 we have a degree-preserving ring homomorphism 
 \[
  \phi \co \g \to \G, \qquad q_k \mapsto Q_k 
 \]
 which is an isomorphism in degrees 0 and 1.  We need to show that 
 $\phi$ is both surjective and injective in all degrees.  

 For the surjectivity of $\phi$, we use a transfer argument.  We have 
 \[
  \nu_3(|\Sigma_3^{\wr d}|) = \nu_3(|\Sigma_{3^d}|) = (3^d - 1) / 2 
 \]
 where $\nu_3(-)$ is the 3-adic valuation, and $(-)^{\wr d}$ is the 
 $d$-fold wreath product.  Thus following the proof of 
 \cite[Proposition 3.17]{cong}, we see that $\G$ is generated in degree 
 1, and hence $\phi$ is surjective.  

 By Remark \ref{rmk:rank} and (the $E_0$-linear dual of) 
 \cite[Theorem 1.1]{Str98}, $\g[d]$ and $\G[d]$ are of the same rank 
 $1 + 3 + \cdots + 3^d$ as free modules over $E_0$.  Hence $\phi$ is 
 also injective.  
\end{proof}

\section{$K(1)$-local power operations}
\label{sec:K(1)}

Let $F \coloneqq L_{K(1)} E$ be the $K(1)$-localization of $E$.  The 
following diagram describes the relationship between $K(1)$-local power 
operations on $F^0$ (cf.~\cite[Section 3]{hopkins} and 
\cite[Section IX.3]{H_infty}) and the power operation on $E^0$ in 
Corollary \ref{cor:psi3}: 
\begin{center}
\begin{tikzpicture}
        \node (LT) at (0, 2) {$E^0$}; 
        \node (RT) at (3, 2) {$E^0 B\Sigma_3 / I$}; 
        \node (LB) at (0, 0) {$F^0$}; 
        \node (MB) at (3, 0) {$F^0 B\Sigma_3 / J$}; 
        \node (RB) at (4.3, 0) {$\cong F^0.  $}; 
        \draw [->] (LT) -- node [above] {$\scriptstyle \p$} (RT); 
        \draw [->] (LT) -- (LB); 
        \draw [->] (RT) -- (MB); 
        \draw [->] (LB) -- node [above] {$\scriptstyle \psi_F^3$} (MB); 
\end{tikzpicture}
\end{center}
Here $\psi_F^3$ is the $K(1)$-local power operation induced by $\p$, and 
$J \cong F^0 \otimes_{E^0} I$ is the transfer ideal 
(cf.~\eqref{transfer}).  Recall from \isog{i}, \eqref{S_3}, and 
Corollary \ref{cor:psi3} that $\p$ arises from the universal degree-3 
isogeny which is parametrized by the ring $\s_3$ with 
\[
 \big( S_3 \big)_{(3,h)}^\wedge \cong E^0 B\Sigma_3 / I.  
\]
The vertical maps are induced by the $K(1)$-localization $E \to F$.  In 
terms of homotopy groups, this is obtained by inverting the generator 
$h$ and completing at the prime 3 (see \cite[Corollary 1.5.5]{hovey}): 
\[
 E_* = \BZ_9 \llbracket h \rrbracket [u^{\pm1}] \qquad \ad \qquad F_* = \BZ_9 \llbracket h \rrbracket [h^{-1}]_3^\wedge [u^{\pm1}] 
\]
with 
\[
 F_0 = \BZ_9 (\!(h)\!)_3^\wedge = \left.\left\{\sum_{n = -\infty}^{\infty} k_n h^n~\right|~k_n \in \BZ_9, \lim_{n \to -\infty} k_n = 0\right\}.  
\]
The formal group $\HC$ over $E^0$ has a unique order-3 subgroup after 
being pulled back to $F^0$ (cf.~Remark \ref{rmk:dmod3}), and the map 
\[
 E^0 B\Sigma_3 / I \to F^0 B\Sigma_3 / J \cong F^0 
\]
classifies this subgroup.  Along the base change 
\[
 E^0 B\Sigma_3 / I \to F^0 \otimes_{E^0} (E^0 B\Sigma_3 / I) \cong (F^0 \otimes_{E^0} E^0 B\Sigma_3) / J \cong F^0 B\Sigma_3 / J, 
\]
the special fiber of the 3-divisible group of $\HC$ which consists 
solely of a formal component may split into formal and \'etale 
components.  We want to take the formal component so as to keep track of 
the unique order-3 subgroup of the formal group over $F^0$.  This 
subgroup gives rise to the $K(1)$-local power operation $\psi_F^3$.  

Recall from \eqref{S_3} that $S_3 = S[\A] \big/ \big( w(\A) \big)$.  
Since 
\[
 w(\A) = \A^4 - 6 \A^2 + (h - 9) \A - 3 \equiv \A (\A^3 + h) \md 3, 
\]
the equation $w(\A) = 0$ has a unique root $\A = 0$ in $\BF_9 (\!(h)\!)$ 
(cf.~\eqref{Amod3}).  By Hensel's lemma this unique root lifts to a root 
in $\BZ_9 (\!(h)\!)_3^\wedge$; it corresponds to the unique order-3 
subgroup of $\HC$ over $F^0$.  Plugging this specific value of $\A$ into 
the formulas for $\p$ in Corollary \ref{cor:psi3}, we then get an 
endomorphism of the ring $F^0$.  This endomorphism is the $K(1)$-local 
power operation $\psi_F^3$.  

Explicitly, with $h$ invertible in $F^0$, we solve for $\A$ from 
$w(\A) = 0$ by first writing 
\[
 \A = (3 + 6 \A^2 - \A^4) / (h - 9) = (3 + 6 \A^2 - \A^4) \sum_{n = 1}^\infty 9^{n-1} h^{-n} 
\]
and then substituting this equation into itself recursively.  We plug 
the power series expansion for $\A$ into $\p(h)$ and get 
\[
 \psi_F^3(h) = h^3 - 27 h^2 + 183 h - 180 + 186 h^{-1} + 1674 h^{-2} + (\text{lower-order terms}).  ~~~
\]
Similarly, writing $h$ as $c^2 + 1$ in $w(\A) = 0$, we solve for $\A$ in 
terms of $c$ and get 
\[
 \psi_F^3(c) = c^3 - 12 c - 6 c^{-1} - 84 c^{-3} - 933 c^{-5} - 10956 c^{-7} + (\text{lower-order terms}).  
\]

\appendix
\section*{Appendices}

Here we list long formulas whose appearance in the main body might 
affect readability.  The calculations involve power series expansions 
and manipulations of long polynomials with large coefficients (division, 
factorization, and finding greatest common divisors).  They are done 
using the software {\em Wolfram Mathematica 8}.  The commands 
\texttt{Reduce} and \texttt{Solve} are used to extract relations out of 
given identities.

\section{Formulas in the proof of Proposition 4}
\label{apx:tors}

In the proof of Proposition \ref{prop:tors}, we have 
\begin{equation*}
\begin{split}
 \Tf(u) = & -\frac{u^4}{a^2 b} \big( b^4 u^8 + 3 a b^3 u^7 + 3 a^2 b^2 u^6 + (a^3 b + 7 a b^2) u^5 + (6 a^2 b - 6 b^2) u^4 \qquad~~ \\
          & + 9 a b u^3 + (-a^2 + 8 b) u^2 - 3 a u - 3 \big), \\
 Q_1(v) = & ~ a b^2 v^2 + (b^2 d^2 + 2 a b d - b) v + \frac{b^2 d^4}{a} + 2 b d^3 + a d^2 - \frac{2 b d^2}{a} - d + \frac{1}{a}, 
\end{split}
\end{equation*}
\begin{equation*}
\begin{split}
 R_1(v) = & ~ (\frac{b^3 d^6}{a} + 2 b^2 d^5 + a b d^4 - \frac{3 b^2 d^4}{a} + 2 b d^3 + \frac{3 b d^2}{a} - \frac{1}{a}) v + \frac{b^2 d^7}{a} + 2 b d^6 \\
          & + a d^5 - \frac{2 b d^5}{a} + 2 d^4 + \frac{d^3}{a}, \\
 Q_2(v) = & ~ \frac{a}{(b^3 d^6 + 2 a b^2 d^5 + a^2 b d^4 - 3 b^2 d^4 + 2 a b d^3 + 3 b d^2 - 1)^2} \big( (a b^4 d^6 + 2 a^2 b^3 d^5 \\
          & + a^3 b^2 d^4 - 3 a b^3 d^4 + 2 a^2 b^2 d^3 + 3 a b^2 d^2 - a b) v - b^4 d^8 - 2 a b^3 d^7 - a^2 b^2 d^6 \\
          & + 4 b^3 d^6 - a b^2 d^5 + a^2 b d^4 - 6 b^2 d^4 + 4 a b d^3 + 4 b d^2 - a d - 1 \big), \\
 R_2 = & - \frac{a d^4}{(b^3 d^6 + 2 a b^2 d^5 + a^2 b d^4 - 3 b^2 d^4 + 2 a b d^3 + 3 b d^2 - 1)^2} (b^4 d^8 + 3 a b^3 d^7 \\
       & + 3 a^2 b^2 d^6 + a^3 b d^5 + 7 a b^2 d^5 + 6 a^2 b d^4 - 6 b^2 d^4 + 9 a b d^3 - a^2 d^2 + 8 b d^2 \\
       & - 3 a d - 3), \\
 K(u) = & ~ \frac{b^3 u^6}{a} + 2 b^2 u^5 + (a b - \frac{3 b^2}{a}) u^4 + 2 b u^3 + \frac{3 b u^2}{a} - \frac{1}{a}, \\
 L(u) = & ~ \frac{b^2 u^7}{a} + 2 b u^6 + (a - \frac{2 b}{a}) u^5 + 2 u^4 + \frac{u^3}{a}, \\
 M(u) = & ~ \frac{b}{a^2 (a^2 - 16 b)^2} \big( (10 a^3 b^3 - 112 a b^4) u^5 + (19 a^4 b^2 - 217 a^2 b^3 - 16 b^4) u^4 \\
        & + (8 a^5 b - 126 a^3 b^2 + 304 a b^3) u^3 + (-a^6 + 34 a^4 b -266 a^2 b^2 +32 b^3) u^2 \\
        & + (28 a^3 b - 384 a b^2) u - 4 a^4 + 51 a^2 b - 16 b^2 \big), \\
 N(u) = & -\frac{1}{a (a^2 - 16 b)^2} \big( (10 a^3 b^5 - 112 a b^6) u^7 + (29 a^4 b^4 - 329 a^2 b^5 - 16 b^6) u^6 \\
        & + (27 a^5 b^3 - 313 a^3 b^4 - 48 a b^5 ) u^5 + (7 a^6 b^2 - 15 a^4 b^3 - 837 a^2 b^4 - 16 b^5) u^4 \\
        & + (-a^7 b + 66 a^5 b^2 - 714 a^3 b^3 + 528 a b^4) u^3 + (-4 a^6 b + 137 a^4 b^2 \\
        & - 1147 a^2 b^3 + 80 b^4) u^2 + (-12 a^5 b + 237 a^3 b^2 - 1200 a b^3) u + a^6 - 44 a^4 b \\
        & + 409 a^2 b^2 - 48 b^3 \big).  
\end{split}
\end{equation*}

\section{Formulas in the proof of Proposition 6}
\label{apx:isog}

In the proof of Proposition \ref{prop:isog}, the power series expansion 
of $v$ in terms of $u$ up to $u^{12}$ is 
\begin{equation*}
\begin{split}
 v = & ~ u^3 - a u^4 + (a^2 + b) u^5 + (-a^3 - 3 a b) u^6 + (a^4 + 6 a^2 b + b^2) u^7 + (-a^5 - 10 a^3 b \\
     & - 6 a b^2) u^8 + (a^6 + 15 a^4 b + 20 a^2 b^2 + b^3) u^9 + (-a^7 - 21 a^5 b - 50 a^3 b^2 \\
     & - 10 a b^3) u^{10} + (a^8 + 28 a^6 b + 105 a^4 b^2 + 50 a^2 b^3 + b^4) u^{11} + (-a^9 - 36 a^7 b \\
     & - 196 a^5 b^2 - 175 a^3 b^3 - 15 a b^4) u^{12}.  
\end{split}
\end{equation*}

The group law on $C$ satisfies: 
\begin{itemize}
 \item given $P(u,v)$, the coordinates of $-P$ are 
 \[
  \left( -\frac{v}{u (u + b v)},-\frac{v^2}{u^2 (u + b v)} \right); 
 \]

 \item given $P_1(u_1,v_1)$ and $P_2(u_2,v_2)$, the coordinates of 
 $-(P_1 + P_2)$ are 
 \[
  u_3 \coloneqq a k - \frac{b m}{1 + b k} - u_1 - u_2 \qquad \ad \qquad v_3 \coloneqq k u_3 + m 
 \]
 where 
 \[
  k = \frac{v_1 - v_2}{u_1 - u_2} \qquad \ad \qquad m = \frac{u_1 v_2 - u_2 v_1}{u_1 - u_2}.  
 \]
\end{itemize}
Given $P(u,v)$ and $Q(d,e)$, with the above notations and formulas, 
\begin{itemize}
 \item set 
 \[
  (u_1,v_1) = \left( -\frac{v}{u (u + b v)},-\frac{v^2}{u^2 (u + b v)} \right) \qquad \ad \qquad (u_2,v_2) = (d,e) 
 \]
 so that 
 \[
  P - Q = (u_3,v_3); 
 \]

 \item set 
 \[
  (u_1,v_1) = (u,v) \qquad \ad \qquad (u_2,v_2) = (d,e) 
 \]
 so that 
 \[
  P + Q = \left( -\frac{v_3}{u_3 (u_3 + b v_3)},-\frac{v_3^2}{u_3^2 (u_3 + b v_3)} \right).  
 \]
\end{itemize}
Plugging the coordinates of $P - Q$ and $P + Q$ into \eqref{u'v'}, in 
view of \eqref{f}, we then have in \eqref{KL} 
\begin{equation*}
\begin{split}
      \K = & -\frac{1}{a^2 - 16 b} \big( a b^3 d^7 + (3 a^2 b^2 - 2 b^3) d^6 + (3 a^3 b - 6 a b^2) d^5 + (a^4 + a^2 b + 2 b^2) d^4 \\
           & + (4 a^3 - 15 a b) d^3 + (a^2 + 2 b) d^2 - 12 a d - 18 \big), \\
 \lambda = & -\frac{1}{a^2 b^2 (a^2 - 16 b)} \big( (a^3  b^3 - 11 a b^4) d^7 + (3 a^4 b^2 - 33 a^2 b^3 - 4 b^4) d^6 + (3 a^5 b \\
           & - 33 a^3 b^2 - 15 a b^3) d^5 + (a^6 - 4 a^4 b - 96 a^2 b^2 - 4 b^3) d^4 + (6 a^5 - 80 a^3 b \\
           & + 31 a b^2) d^3 + (10 a^4 - 153 a^2 b + 20 b^2) d^2 + (3 a^3 - 117 a b) d - 6 a^2 - 12 b \big).  
\end{split}
\end{equation*}
More extended power series expansions in $u$ for $u'$ (up to $u^6$) and 
$v'$ (up to $u^9$) are needed in \eqref{KL} to determine the 
coefficients in the equation of $C'$: 
\begin{equation*}
\begin{split}
 u' = & -\frac{1}{a^2 - 16 b} \big( (a b^3 d^7 + 3 a^2 b^2 d^6 - 2 b^3 d^6 + 3 a^3 b d^5 - 6 a b^2 d^5 + a^4 d^4 + a^2 b d^4 \quad~~~ \\
      & + 2 b^2 d^4 + 4 a^3 d^3 - 15 a b d^3 + a^2 d^2 + 2 b d^2 - 12 a d - 18) u + (-a^2 b^3 d^7 
\end{split}
\end{equation*}
\begin{equation*}
\begin{split}
      & + 12 b^4 d^7 - 3 a^3 b^2 d^6 + 36 a b^3 d^6 - 3 a^4 b d^5 + 36 a^2 b^2 d^5 + 4 b^3 d^5 - a^5 d^4 \\
      & + 5 a^3 b d^4 + 94 a b^2 d^4 - 6 a^4 d^3 + 85 a^2 b d^3 - 76 b^2 d^3 - 9 a^3 d^2 + 136 a b d^2 + 60 b d \\
      & + 6 a) u^2 + (a^3 b^3 d^7 - 17 a b^4 d^7 + 3 a^4 b^2 d^6 - 50 a^2 b^3 d^6 - 8 b^4 d^6 + 3 a^5 b d^5 \\
      & - 48 a^3 b^2 d^5 - 27 a b^3 d^5 + a^6 d^4 - 7 a^4 b d^4 - 150 a^2 b^2 d^4 - 16 b^3 d^4 + 7 a^5 d^3 \\
      & - 113 a^3 b d^3 + 9 a b^2 d^3 + 16 a^4 d^2 - 258 a^2 b d^2 + 56 b^2 d^2 + 15 a^3 d - 237 a b d \\
      & + 2 a^2 - 32 b) u^3 + (-a^4 b^3 d^7 + 16 a^2 b^4 d^7 + 12 b^5 d^7 - 3 a^5 b^2 d^6 + 46 a^3 b^3 d^6 \\
      & + 64 a b^4 d^6 - 3 a^6 b d^5 + 42 a^4 b^2 d^5 + 121 a^2 b^3 d^5 + 4 b^4 d^5 - a^7 d^4 + 3 a^5 b d^4 \\
      & + 209 a^3 b^2 d^4 + 122 a b^3 d^4 - 8 a^6 d^3 + 114 a^4 b d^3 + 248 a^2 b^2 d^3 - 76 b^3 d^3 \\
      & - 24 a^5 d^2 + 384 a^3 b d^2 - 4 a b^2 d^2 - 33 a^4 d + 519 a^2 b d + 60 b^2 d - 18 a^3 \\
      & + 282 a b) u^4 + (a^5 b^3 d^7 - 9 a^3 b^4 d^7 - 117 a b^5 d^7 + 3 a^6 b^2 d^6 - 24 a^4 b^3 d^6 \\
      & - 396 a^2 b^4 d^6 - 24 b^5 d^6 + 3 a^7 b d^5 - 18 a^5 b^2 d^5 - 484 a^3 b^3 d^5 - 111 a b^4 d^5 + a^8 d^4 \\
      & + 7 a^6 b d^4 - 307 a^4 b^2 d^4 - 1038 a^2 b^3 d^4 + 9 a^7 d^3 - 73 a^5 b d^3 - 1181 a^3 b^2 d^3 \\
      & + 573 a b^3 d^3 + 33 a^6 d^2 - 451 a^4 b d^2 - 1236 a^2 b^2 d^2 + 72 b^3 d^2 + 54 a^5 d \\
      & - 807 a^3 b d - 873 a b^2 d + 36 a^4 - 570 a^2 b - 48 b^2) u^5 + (-a^6 b^3 d^7 - 5 a^4 b^4 d^7 \\
      & + 337 a^2 b^5 d^7 + 12 b^6 d^7 - 3 a^7 b^2 d^6 - 19 a^5 b^3 d^6 + 1064 a^3 b^4 d^6 + 204 a b^5 d^6 \\
      & - 3 a^8 b d^5 - 27 a^6 b^2 d^5 + 1164 a^4 b^3 d^5 + 638 a^2 b^4 d^5 + 4 b^5 d^5 - a^9 d^4 - 24 a^7 b d^4 \\
      & + 441 a^5 b^2 d^4 + 3195 a^3 b^3 d^4 + 182 a b^4 d^4 - 10 a^8 d^3 - 22 a^6 b d^3 + 2956 a^4 b^2 d^3 \\
      & - 645 a^2 b^3 d^3 - 76 b^4 d^3 - 43 a^7 d^2 + 403 a^5 b d^2 + 4594 a^3 b^2 d^2 - 544 a b^3 d^2 \\
      & - 78 a^6 d + 996 a^4 b d + 4014 a^2 b^2 d + 60 b^3 d - 57 a^5 + 852 a^3 b + 942 a b^2) u^6 \big), \\
 v' = & -\frac{1}{a^2 b^2 (a^2 - 16 b)} \big( (a^3 b^3 d^7 - 11 a b^4 d^7 + 3 a^4 b^2 d^6 - 33 a^2 b^3 d^6 - 4 b^4 d^6 \\
      & + 3 a^5 b d^5 - 33 a^3 b^2 d^5 - 15 a b^3 d^5 + a^6 d^4 - 4 a^4 b d^4 - 96 a^2 b^2 d^4 - 4 b^3 d^4 \\
      & + 6 a^5 d^3 - 80 a^3 b d^3 + 31 a b^2 d^3 + 10 a^4 d^2 - 153 a^2 b d^2 + 20 b^2 d^2 + 3 a^3 d \\
      & - 117 a b d - 6 a^2 - 12 b) u^3 + (-2 a^4 b^3 d^7 + 28 a^2 b^4 d^7 - 6 a^5 b^2 d^6 + 82 a^3 b^3 d^6 \\
      & + 28 a b^4 d^6 - 6 a^6 b d^5 + 78 a^4 b^2 d^5 + 90 a^2 b^3 d^5 - 2 a^7 d^4 + 8 a^5 b d^4 + 294 a^3 b^2 d^4 \\
      & + 20 a b^3 d^4 - 14 a^6 d^3 + 202 a^4 b d^3 + 72 a^2 b^2 d^3 - 32 a^5 d^2 + 510 a^3 b d^2 \\
      & - 124 a b^2 d^2 - 30 a^4 d + 546 a^2 b d - 6 a^3 + 204 a b) u^4 + (3 a^5 b^3 d^7 - 38 a^3 b^4 d^7 \\
      & - 107 a b^5 d^7 + 9 a^6 b^2 d^6 - 108 a^4 b^3 d^6 - 409 a^2 b^4 d^6 - 4 b^5 d^6 + 9 a^7 b d^5 \\
      & - 96 a^5 b^2 d^5 - 590 a^3 b^3 d^5 - 47 a b^4 d^5 + 3 a^8 d^4 + a^6 b d^4 - 646 a^4 b^2 d^4 \\
      & - 912 a^2 b^3 d^4 - 4 b^4 d^4 + 24 a^7 d^3 - 292 a^5 b d^3 - 1249 a^3 b^2 d^3 + 639 a b^3 d^3 
\end{split}
\end{equation*}
\begin{equation*}
\begin{split}
~~~~~~~& + 70 a^6 d^2 - 1057 a^4 b d^2 - 849 a^2 b^2 d^2 + 20 b^3 d^2 + 93 a^5 d - 1512 a^3 b d \\
      & - 597 a b^2 d + 48 a^4 - 870 a^2 b - 12 b^2) u^5 + (-4 a^6 b^3 d^7 + 24 a^4 b^4 d^7 + 583 a^2 b^5 d^7 \\
      & - 12 a^7 b^2 d^6 + 60 a^5 b^3 d^6 + 1923 a^3 b^4 d^6 + 156 a b^5 d^6 - 12 a^8 b d^5 + 36 a^6 b^2 d^5 \\
      & + 2268 a^4 b^3 d^5 + 639 a^2 b^4 d^5 - 4 a^9 d^4 - 40 a^7 b d^4 + 1256 a^5 b^2 d^4 + 5128 a^3 b^3 d^4 \\
      & + 140 a b^4 d^4 - 36 a^8 d^3 + 229 a^6 b d^3 + 5409 a^4 b^2 d^3 - 2227 a^2 b^3 d^3 - 127 a^7 d^2 \\
      & + 1597 a^5 b d^2 + 6835 a^3 b^2 d^2 - 748 a b^3 d^2 - 201 a^6 d + 2952 a^4 b d + 5277 a^2 b^2 d \\
      & - 129 a^5 + 2130 a^3 b + 708 a b^2) u^6 + (5 a^7 b^3 d^7 + 35 a^5 b^4 d^7 - 1754 a^3 b^5 d^7 \\
      & - 275 a b^6 d^7 + 15 a^8 b^2 d^6 + 125 a^6 b^3 d^6 - 5511 a^4 b^4 d^6 - 1833 a^2 b^5 d^6 - 4 b^6 d^6 \\
      & + 15 a^9 b d^5 + 165 a^7 b^2 d^5 - 5988 a^5 b^3 d^5 - 4312 a^3 b^4 d^5 - 103 a b^5 d^5 + 5 a^{10} d^4 \\
      & + 130 a^8 b d^4 - 2183 a^6 b^2 d^4 - 17022 a^4 b^3 d^4 - 2940 a^2 b^4 d^4 - 4 b^5 d^4 + 50 a^9 d^3 \\
      & + 159 a^7 b d^3 - 15035 a^5 b^2 d^3 + 179 a^3 b^3 d^3 + 1703 a b^4 d^3 + 206 a^8 d^2 \\
      & - 1708 a^6 b d^2 - 25304 a^4 b^2 d^2 + 1431 a^2 b^3 d^2 + 20 b^4 d^2 + 363 a^7 d - 4398 a^5 b d \\
      & - 23694 a^3 b^2 d - 1437 a b^3 d + 258 a^6 - 3816 a^4 b - 7026 a^2 b^2 - 12 b^3) u^7 \\
      & + (-6 a^8 b^3 d^7 - 164 a^6 b^4 d^7 + 3864 a^4 b^5 d^7 + 3365 a^2 b^6 d^7 - 18 a^9 b^2 d^6 \\
      & - 522 a^7 b^3 d^6 + 11837 a^5 b^4 d^6 + 13701 a^3 b^5 d^6 + 448 a b^6 d^6 - 18 a^{10} b d^5 \\
      & - 582 a^8 b^2 d^5 + 12275 a^6 b^3 d^5 + 21828 a^4 b^4 d^5 + 2395 a^2 b^5 d^5 - 6 a^{11} d^4 \\
      & - 296 a^9 b d^4 + 3283 a^7 b^2 d^4 + 43960 a^5 b^3 d^4 + 30290 a^3 b^4 d^4 + 424 a b^5 d^4 \\
      & - 66 a^{10} d^3 - 1099 a^8 b d^3 + 32246 a^6 b^2 d^3 + 30529 a^4 b^3 d^3 - 17045 a^2 b^4 d^3 \\
      & - 310 a^9 d^2 + 679 a^7 b d^2 + 66726 a^5 b^2 d^2 + 24833 a^3 b^3 d^2 - 2192 a b^4 d^2 - 588 a^8 d \\
      & + 4809 a^6 b d + 73578 a^4 b^2 d + 23685 a^2 b^3 d - 444 a^7 + 5316 a^5 b + 30936 a^3 b^2 \\
      & + 1704 a b^3) u^8 + (7 a^9 b^3 d^7 + 392 a^7 b^4 d^7 - 6863 a^5 b^5 d^7 - 17458 a^3 b^6 d^7 \\
      & - 515 a b^7 d^7 + 21 a^{10} b^2 d^6 + 1218 a^8 b^3 d^6 - 20647 a^6 b^4 d^6 - 61745 a^4 b^5 d^6 \\
      & - 6709 a^2 b^6 d^6 - 4 b^7 d^6 + 21 a^{11} b d^5 + 1302 a^9 b^2 d^5 - 20664 a^7 b^3 d^5 \\
      & - 81924 a^5 b^4 d^5 - 22146 a^3 b^5 d^5 - 183 a b^6 d^5 + 7 a^{12} d^4 + 567 a^{10} b d^4 \\
      & - 3982 a^8 b^2 d^4 - 97733 a^6 b^3 d^4 - 158644 a^4 b^4 d^4 - 8392 a^2 b^5 d^4 - 4 b^6 d^4 \\
      & + 84 a^{11} d^3 + 2878 a^9 b d^3 - 57242 a^7 b^2 d^3 - 160981 a^5 b^3 d^3 + 59447 a^3 b^4 d^3 \\
      & + 3223 a b^5 d^3 + 442 a^{10} d^2 + 2563 a^8 b d^2 - 142138 a^6 b^2 d^2 - 189134 a^4 b^3 d^2 \\
      & + 18323 a^2 b^4 d^2 + 20 b^5 d^2 + 885 a^9 d - 2382 a^7 b d - 179958 a^5 b^2 d \\
      & - 164688 a^3 b^3 d - 2637 a b^4 d + 696 a^8 - 5400 a^6 b - 92938 a^4 b^2 - 29078 a^2 b^3 \\
      & - 12 b^4) u^9 \big).  
\end{split}
\end{equation*}


\newcommand{\MRn}[2]{\href{http://www.ams.org/mathscinet-getitem?mr=#1}{MR#1 #2}}

\end{document}